\documentclass[preprint,12pt,authoryear]{elsarticle}

\usepackage{setspace}
\usepackage{longtable}
\usepackage[utf8]{inputenc}
\usepackage[margin = 1in]{geometry}
\usepackage{amsmath, amssymb, amsthm, verbatim, amsfonts}
\usepackage{bigstrut}
\usepackage{mathtools}
\usepackage{pdflscape}
\usepackage{enumitem}
\usepackage{tabularx}
\usepackage{soul}
\usepackage{multirow}
\usepackage{float}
\usepackage{rotating} 
\usepackage{soul} 
\usepackage{graphicx}
\usepackage{subcaption}
\usepackage{url} 

\usepackage{algorithm}
\usepackage[noend]{algorithmic}

\usepackage{pgfplots} 
\usepackage{booktabs}	
\usetikzlibrary{patterns}
\usepackage{hyperref, doi}
\hypersetup{colorlinks = true, linkcolor = blue, citecolor = cyan}

\usepackage{pgfplotstable}

\usepackage{tikz}
\usetikzlibrary{shapes.geometric, arrows}

\tikzstyle{startstop} = [rectangle, rounded corners, minimum width=3cm, minimum height=1cm,text centered, draw=black, fill=gray!20]
\tikzstyle{process} = [rectangle, minimum width=3cm, minimum height=1cm, text centered, draw=black, fill=blue!10]
\tikzstyle{decision} = [diamond, minimum width=3cm, minimum height=1cm, text centered, draw=black, fill=red!10]
\tikzstyle{arrow} = [thick,->,>=stealth]

\allowdisplaybreaks

\usepackage{amsthm}
\newtheorem{thm}{Theorem}

\newtheorem{dfn}{Definition}

\newtheorem{problem}{Problem}

\journal{journal Computers \& Operations Research}

\makeatletter
\def\ps@pprintTitle{%
  \let\@oddhead\@empty
  \let\@evenhead\@empty
  \let\@oddfoot\@empty
  \let\@evenfoot\@oddfoot
}
\makeatother

\begin{document}

\begin{frontmatter}



\title{The Weighted Connected p-Median Problem}

	
\author[Yeditepe]{Murat Elhüseyni\corref{cor1}}
			\ead{murat.elhuseyni@yeditepe.edu.tr}

                \author[Sabanci]{Burak Kocuk}
			\ead{burakkocuk@sabanciuniv.edu}
			
			\author[InnoRenew,Szeged]{Miklós Krész}
			\ead{miklos.kresz@innorenew.eu}
			
			\address[Yeditepe]{%
				Yeditepe University, Industrial Engineering, 34755, Istanbul, Turkey}

                \address[Sabanci]{%
				Sabancı University, Industrial Engineering, 34956, Istanbul, Turkey}

                \address[InnoRenew]{%
				InnoRenew CoE, UP IAM \& UP FAMNIT, University of Primorska,  6000, Koper, Slovenia}

                \address[Szeged]{%
				University of Szeged, Department of Applied Informatics, 6725, Szeged, Hungary }

			\cortext[cor1]{Corresponding author}

\begin{abstract} 

The connected $p$-median problem is defined as a variant of the classical $p$-median problem when the 
facility nodes induce a connected subgraph. In this paper, we introduce the weighted version of the above problem when the weight of the facility connection in the objective function is defined by the minimum weight spanning tree of the facility nodes. This approach is motivated by the sink node selection in distributed sensor networks, in which the collected information is shared among the sink nodes through the minimum spanning tree. The weights of the graph determining the network topology of the candidate sink nodes as connection costs are distinguished from the standard access costs of the $p$-median problem. The fixed deployment costs for the setup of facilities are also  considered. The objective is to minimize the overall cost as the sum of deployment cost, access cost and connection cost. 
We show that the problem is NP-hard and propose three mixed-integer linear programming (MILP) formulations adapted from the traveling salesperson problem literature. Since these formulations are poorly scalable with respect to network size, we develop a four-phase matheuristic method based on linear programming rounding. We conduct an extensive computational study to evaluate the performance of the MILP formulations and 22 variants of the matheuristic under different parameter settings. The results indicate that the MILP models perform effectively on small instances but struggle to solve medium- and large-scale instances within a two-hour time limit. In contrast, several matheuristic variants consistently produce high-quality solutions within minutes. Finally, we analyze the impact of network structure, size, density, and the parameter $p$ on solution quality, providing further insights for network design.
\end{abstract}

\begin{keyword}
%
%
facility location \sep mathematical programming \sep matheuristics \sep LP-rounding 

\end{keyword}

\end{frontmatter}

\section{Introduction}

One of the primary problems in discrete facility location theory is the $p$-median problem in which $p$ nodes need to be selected in a network to locate facilities in such a way that the overall sum of the access costs from the rest of the network to the facilities is minimized. Access cost from a given node $n$ to the facilities is generally defined as the length of the shortest path in the network from $n$ to the closest facility. For a classical summary of solution methods the reader can consult \cite{solution-p-median}.

The connected $p$-median problem requires the facility nodes to induce a connected subgraph. A related problem is determining the minimum connected dominating set  when a connected subgraph $D$ of minimum cardinality is searched such that each node of the graph is connected by an edge to $D$. Connected dominating sets are thoroughly studied (for an early review see \cite{guha1998approximation}), but research on connected $p$-medians remains limited to special graph classes (see e.g. \cite{bai2021connected}, \cite{chang2016connected}). The state-of-the-art with the $p$-center and $p$-centdian problems (convex combination of $p$-median and $p$-center) is analogous: motivated by sensor network design the problems were studied for special graphs only (\cite{Nguyen2022ALT}, \cite{yen2007p}).

For the weighted connected $p$-median problem, we assume that the (undirected) network of candidate nodes for facility locations is settled with nonnegative weights on the edges. These weights are depicted as connection costs and independent from the access costs used for the classical $p$-median problem. We can also consider the deployment cost for each candidate facility node induced by installation, which is fixed. Since the objective function is naturally the overall sum of the access, connection and deployment costs,  using the minimum spanning tree for this above goal is a natural choice from a theoretical point of view on the one hand, however, on the other hand, the motivation of the problem originates from the application area of distributed system based sensor networks summarized in the following.

In conventional wireless sensor network (WSN) deployments, sensing devices acquire environmental data and forward it to a centralized infrastructure, typically cloud-based, where storage and analysis are performed \citep{akyildiz2002wireless}. While this architecture simplifies system design, it introduces several critical limitations, including increased communication delay, excessive bandwidth consumption, and potential risks related to data privacy and system security. To overcome these drawbacks, recent research has shifted toward decentralized processing approaches, most notably edge computing. This paradigm relocates computational tasks closer to where the data is generated, enabling intermediate nodes—such as gateway or sink devices—to handle operations locally \citep{shi2016edge}. This architecture can be organized as a distributed system based network \citep{mrissa2022privacy} in which we distinguish two levels of communication: sensor level and gateway level.  Each sensor is characterized by a specific bandwidth demand, representing the traffic capacity requirement that must be routed to a sink through the shortest-path distance over an access network, thereby incurring an \textit{access cost} \citep{andrews1998access}. The gateway level of the network comprises gateways that store, collect and process data in a distributed manner \citep{mrissa2022privacy}. Since sink nodes aggregate all data from sensors and decompose them to route messages to their respective destinations, they require substantial switching capabilities, leading to the \textit{deployment cost} \citep{andrews1998access}. Concerning budget constraints,  the number of gateways is limited to  a predefined number of $p$.  \citep{andrews1998access, mrissa2022privacy}. All collected data are shared among all the sink nodes for the gateway subnetwork to operate as a distributed system \citep{mrissa2022privacy}, but the cost of this data transmission (\textit{connection cost})  is different from the access cost and independent from the amount of data.  Also, while the sensor level communication is a single path transmission toward the nearest sink node, on the gateway level data are transmitted from each sink node on multiple paths. In the gateway deployment problem the goal is to identify the gateway nodes in a sensor network in such a way to minimize the overall cost of sensor level and gateway level communication as the sum of access costs, deployment costs and connection costs.

Concerning the weighted connected $p$-median problem, it is easy to see the analogous characteristics with the distributed system based sensor networks. In the objective function, the connection cost needs further explanation. When data are collected at a sink node, they are shared with the remaining gateways through multiple paths. To minimize the connection cost, this multi-routing structure is naturally represented by a minimum spanning tree. Therefore, the connection cost does depend on the gateway subnetwork only.  On the other hand, it needs to be scaled up by the number of packets, which is fixed either by the size of the network or by the number of sink nodes (depending on the technology). Consequently, by appropriate scaling of the edge weights, connection cost is realistically determined by minimum spanning tree. By the above analogy in the rest of the paper we will use the term ``sink nodes'' for the gateway nodes.

In summary, our goal in this paper can be described as follows:
Given a network of candidate sink nodes with sink deployment costs and edges weighted by connection costs, together with access costs from demand nodes to sink nodes, our objective is to select $p$ sink nodes, determine a spanning-tree backbone among them, and route demands to sinks in the most cost-efficient manner. 

Note that the motivating example as well as the test cases in the paper will also assume that the problem is metric with respect to access costs defined by the shortest paths in the network of demand and sink nodes. Nevertheless, our methodology does not request the metric assumption, it can be considered in the most general sense.

\subsection{Literature Review} 

When examining the literature on facility location in networks, existing studies can be broadly categorized into three main domains: (i) deployment cost, (ii) type of connection of facilities, and (iii) access types of sink nodes.

In the domain of deployment cost, one of the earliest contributions is presented by \citet{cornuejols1983uncapicitated}, who introduce the \textit{uncapacitated facility location problem}, where facilities incur deployment costs. In the context of sensor networks, \citet{andrews1998access} consider switching costs of sinks as deployment costs. In the same year, \citet{guha1998approximation} incorporate node weights into the weighted connected dominating set problem. Afterwards, \citet{swamy2004primal} extend the \textit{uncapacitated facility location problem} by requiring facilities to be connected. Note that the latter two studies ensure connectivity through a Steiner tree constructed over the selected medians; therefore, the resulting connectivity is weak. In this work, we also consider sink deployment costs.

The second category, namely the connection type of facilities, can be divided into two subcategories: weak connectivity and strong connectivity. In the weak connectivity domain, \citet{guha1998approximation} provide one of the earliest studies through the weighted connected dominating set problem. Later, \citet{karget2000building} address the maybecast problem, whose approximation algorithm is further refined by \citet{gupta2001provisioning} in the context of virtual private networks (VPNs). Afterwards, \citet{swamy2004primal} study the connected and $p$-connected facility location problems. In these studies, connectivity is established through Steiner-tree-based structures over the selected facilities. In this work, we strengthen these weak connectivity structures by enforcing strong connectivity through a spanning tree among the facilities.
Regarding strong connectivity, \citet{ravi1999approximation} appear to be among the first to investigate ring structures (closed loops) over sinks in telecommunication networks. Subsequently, \citet{yen2007p} address the connected $p$-center problem. \citet{ren2011note} study the minimum connected set cover problem, followed by \citet{elbassioni2012relation}, who consider its edge-weighted version. Afterwards, \citet{solmaz2014communication} investigate the connected $p$-center problem, although connectivity is achieved through wireless communication, resulting in a secondary connectivity graph. For special graph classes, \citet{chang2016connected} address the connected $p$-median problem on block graphs, whereas \citet{bai2021connected} solve the connected median problem on cactus graphs. Note that all of these connectivity studies, except \citet{ravi1999approximation}, also satisfy acyclicity because they construct tree structures, whereas \citet{ravi1999approximation} employ a loop structure. Unlike the existing literature, we enforce strong connectivity explicitly through a spanning-tree backbone.

Concerning the access type, the literature can be grouped into two subcategories:   studies with exactly $p$ facilities and  studies without any restrictions on the facility number. The first subcategory can further be divided into the $p$-median and $p$-center problems. In the $p$-median literature, early studies such as \citet{lin1992approximations}, \citet{lin1992geoapproximation}, and \citet{charikar1999constant} do not consider connectivity requirements. \citet{swamy2004primal} appear to be the first to incorporate connectivity into the $p$-median problem, followed by subsequent studies \citep{chang2016connected,bai2021connected,mrissa2022privacy}. In contrast, only a limited number of studies address the connected $p$-center problem, namely \citet{yen2007p} and \citet{solmaz2014communication}.
In the second subcategory, the number of facilities is unrestricted. Early works such as \citet{cornuejols1983uncapicitated} and \citet{andrews1998access} do not impose connectivity requirements among facilities. Among the remaining studies, some consider weak connectivity \citep{guha1998approximation,karget2000building,gupta2001provisioning,swamy2004primal}, whereas others address strong connectivity \citep{ravi1999approximation,ren2011note,elbassioni2012relation}. Our problem is most closely related to the connected $p$-median problem.

Table \ref{tab:my-table} provides an overview of location studies focusing on connectivity over graphs in chronological order. In terms of the main problem components, only a limited number of studies incorporate deployment costs into their formulations. Regarding connectivity, only a small subset of the literature neglects connectivity requirements altogether. In terms of access structure, studies that disregard deployment costs generally design networks with exactly $p$ facilities. The literature also differs with respect to several secondary characteristics. Regarding graph structure, it is evident that researchers have primarily focused on general graphs. The table further shows that the literature addresses both practical telecommunication network design problems and theoretically motivated combinatorial optimization problems. Notably, heuristic and predominantly approximation algorithms are widely employed to solve these problems whereas mixed-integer linear programming (MILP) models are used relatively less frequently.

In this study, we address a research gap by simultaneously considering deployment costs, strong acyclic connectivity through a spanning tree, and the weighted $p$-median problem. In contrast to the existing literature, we develop exact MILP formulations and a linear programming (LP) rounding-based matheuristic approach.

\begin{table}[H]
	\centering
	\caption{Selected studies on facility location in networks. } 
	\resizebox{\textwidth}{!}{%
		\begin{tabular}{|c|c|ccc|ccc|c|c|c|}
			\hline
			\textbf{Study} & \textbf{Deployment cost} & \multicolumn{3}{c|}{\textbf{Connection type}} & \multicolumn{3}{c|}{\textbf{Access type}} & \textbf{Graph type} & \textbf{Application area} & \textbf{Solution method} \\ \hline
			& \textbf{yes}        & \textbf{weak} & \textbf{strong} & \textbf{acyclicity} & \textbf{p-median} & \textbf{p-center} & \textbf{no restriction} & & & \\ \hline
			\citet{cornuejols1983uncapicitated} & \checkmark &  &  &  &  &  & \checkmark & general & logistics & MILP \\  
            \cite{lin1992geoapproximation} & & & & & \checkmark &  & & geometric & combinatorial & approximation alg \\
            \cite{lin1992approximations} & & & & & \checkmark &  & & general & combinatorial & approximation alg \\
			 \citet{andrews1998access} & \checkmark &  &  &  &  &  & \checkmark & general & telecommunication & approximation alg \\  
			\citet{guha1998approximation} & \checkmark & \checkmark &  & \checkmark &  &  & \checkmark & general & combinatorial & approximation alg \\  
				\citet{ravi1999approximation} &  &  & \checkmark &  &  &  & \checkmark & general & LAN & approximation alg \\    
            \cite{charikar1999constant} & & & & & \checkmark &  & & geometric & combinatorial & approximation alg \\ 
			 \citet{karget2000building} &  & \checkmark &  & \checkmark &  &  & \checkmark & general & telecommunication & approximation alg \\  
			\citet{gupta2001provisioning} &  & \checkmark &  & \checkmark &  &  & \checkmark & general & VPN & approximation alg \\  
			\citet{swamy2004primal} & \checkmark & \checkmark &  & \checkmark & \checkmark &  & \checkmark & general & logistics & approximation alg \\  
			 \citet{yen2007p} &  &  & \checkmark & \checkmark &  & \checkmark &  & tree & internet network & heuristic alg \\  
        \citet{ren2011note} & &  & \checkmark & \checkmark & & & \checkmark & general & combinatorial & approximation alg \\
        \citet{elbassioni2012relation} & &  & \checkmark & \checkmark & & & \checkmark & general & combinatorial & approximation alg \\
			 \citet{solmaz2014communication} &  &  & \checkmark & \checkmark &  & \checkmark &  & general & WSN & exact alg \\  
			 \citet{chang2016connected} &  &  & \checkmark & \checkmark & \checkmark &  &  & block & combinatorial & heuristic alg \\  
			 \citet{bai2021connected} &  &  & \checkmark & \checkmark & \checkmark &  &  & cactus & combinatorial & heuristic alg \\  
		 \citet{mrissa2022privacy} &  &  &  &  & \checkmark &  &  & general & WSN & MILP \\  
			This study & \checkmark   &  & \checkmark & \checkmark & \checkmark &  &  & general & WSN & MILP+matheuristic \\ \hline
			\multicolumn{11}{|l|}{\footnotesize alg: algorithm, MILP: Mixed Integer Linear Programming, LAN: local access network, WSN: wireless sensor network, VPN: Virtual private network} \\ \hline
		\end{tabular}%
	}
	\label{tab:my-table}
\end{table}

\subsection{Our Approach and Contributions} 

Considering that the above-mentioned challenges occur in both theoretical and practical settings, we formulate the following problem. Each node has a nonnegative demand (e.g., the expected amount of environmental data collected by a sensor node). The set of candidate sink nodes induces a connected network, where each potential sink incurs a deployment cost. Routing demand to any sink generates a access cost proportional to the demand, and links between selected sinks incur additional connection costs. The objective is to select the locations  for $p$ sinks forming a spanning tree, assign each demand node to one of the chosen sinks in such a way to minimize the total deployment, connection, and access costs.


To solve this problem, we propose a \textit{baseline} binary integer programming (BIP) formulation that accurately models all problem components except for sink connectivity. Connectivity is ensured through constraints derived from three approaches: Dantzig-Fulkerson-Johnson (DFJ)~\citep{dantzig1954solution}, Miller-Tucker-Zemlin (MTZ)~\citep{miller1960integer}, and a novel \textit{Flow} approach formulated in this research. The baseline model, combined with these connectivity approaches, results in three distinct mixed integer linear programming (MILP) models, where DFJ-based constraints are added in a lazy fashion.

To handle large instances, we design a four-phase matheuristic algorithmic framework with 22 versions, based on the linear programming (LP) rounding technique. A computational analysis is conducted to evaluate the performance of the MILP models and matheuristic variants on synthetically generated graphs commonly studied in the literature.

The key performance indicators (KPIs) are defined as solution quality and CPU time. We examine the impact of graph structure, the number of nodes, and the $p$ parameter on these indicators, as well as on the three cost components, across the MILP models. The performance of each matheuristic version is assessed based on its deviation from the best MILP objective value for the test instances. Finally, the matheuristic version that best balances the KPIs is tested on a large-scale case study, and its performance is evaluated by comparing the objective value of the best feasible solution to the LP relaxation value.                                                                                                                                                                                                  
The main contributions of this study are as follows:

\begin{itemize}
	\item To the best of our knowledge, this is the first study to address the weighted connected $p$-median problem and we establish the NP-hardness of the problem.
	\item We develop three mixed-integer linear programming (MILP) models, adapted from the traveling salesperson problem literature, to provide exact solutions for small to medium-sized instances.
	\item We propose a four-phase linear programming (LP) rounding-based matheuristic algorithmic framework, inspired by the works of \citep{ren2011note, lin1992approximations, lin1992geoapproximation, charikar1999constant}, to effectively solve large-scale instances.
\end{itemize}

The remainder of this work is organized as follows: Section~\ref{sec:problemFormulation} provides the details of the problem, its complexity and introduces the three MILP models. Section~\ref{sec:matheuristic} explains the four phase LP-rounding based matheuristic by giving the algorithm steps at each phase. Section~\ref{sec:numresults} accounts for the generation of test cases and presents the computational results. Section~\ref{sec:conclusions} draws conclusions from tests with   potential research directions.

Preliminary version of some of the presented results can be found in the extended conference abstract \citep{SOR}.

\section{Problem Formulation}
\label{sec:problemFormulation}

In this section we are giving the formal problem definition and the basic mathematical models. As we will see the problem is NP-hard, thus solution methodologies in the next section need to focus on heuristics. In our formulation we will assume that the problem is fully defined by a network structure, i.e. the candidate sink nodes forming a subset of the set of demand nodes mapped into a weighted network structure. In other words, a weighted network is induced by the demand nodes and this network provides the topological structure of the candidate sink node subnetwork as well. On the other hand, the connection cost assigned to an edge between candidate sink nodes is different from the ``basic access weight'' of the same edge. The access cost between a demand node and a candidate sink node is defined by the appropriate shortest path.
The above problem setup is realistic for most of the application areas (sensor networks e.g.) and it clearly defines a metric problem. Nevertheless, as previously highlighted, our methodological framework is not restricted on the metric cases, it can be applied for location problems where the demand nodes are not forming a network structure, but access costs are given directly as inputs between any demand node and sink node.

\subsection{Problem Setting} 



\label{sec:problemSetting}

Suppose that we are given an {undirected}  connected network denoted by $G=(N, E)$ with node set $N$ and edge set $E$. 
Without loss of generality, we can assume that nodes are labeled from $1$ to $|N|$, and every edge $(i,j)$ 
is bidirectional with a single weight of $w_{ij}$. 
We will denote the set of candidate sink locations as $F$, where $F \subseteq N$, out of which exactly $p$ should be deployed. In this network, we assume that each node routes demand $d_i$ to one of the deployed sinks. The \textit{access cost} for transmitting the data from node $i$ to the sink $j$ is $d_i \times t_{ij}$, where $t_{ij}$ is the shortest distance between the demand node $i$ and the sink $j$ calculated using the edge weights.
The \textit{deployment cost} of a sink at node $j$ is denoted as $f_j$. If two deployed sinks $i\in F$ and $j\in F$ are directly connected, then the \textit{connection cost} of $c_{ij}$ is incurred. Our aim in this problem is to find a spanning tree over the deployed $p$ sinks in such a way that  the sum of deployment, access and connection costs is minimized. Table~\ref{tab:problemnotation}, we provide the index sets and parameters for completeness. Since we work on an edge and node weighted graph, we refer to this problem as \textit{the weighted connected $p$-median problem}, which is formally defined as below:   
\begin{problem}[Weighted Connected p-median Problem (denoted by \texttt{WCpMP}]\label{prob:UpMSTP}
	Given the sets and parameters defined in Table~\ref{tab:problemnotation}, choose a set of sinks $F^* \subseteq F$ with $|F^*|=p$ and edges $E^* \subseteq E$  such that the induced graph $G^*=(F^*,E^*)$ forms a spanning tree and the total cost $\sum_{j\in F^*} f_j + \sum_{i\in N } d_i \min\{t_{ij} : j \in F^*\} + \sum_{(i,j)\in E^*} c_{ij}$ is minimized.
\end{problem}

\begin{table}[H]
	\small
	\caption{Sets and parameters.}
	\begin{center}
		\begin{tabular}{|p{2cm} p{13.5cm}|}
			\hline
			Sets & \\
			\hline
			$G$ & undirected network \\ 
			$N$ & set of demand locations (nodes) \\ 
			$F$ & set of candidate sink locations, $F \subseteq N$\\
			$E$ & set of edges \\
			$\delta_i$ & set of adjacent nodes of node $i$, $\delta_i= \{ j \vert (i,j) \in E \} \cup \{ j \vert (j,i) \in E \} $ \\ 
			\hline
			Parameters & \\
			\hline
			$p$ & the number of sinks to be deployed \\
			$d_i$ & demand at node $i \in N$  \\
			$f_i$ & deployment cost of a sink at candidate location $i \in F$ \\
			$w_{ij}$ & edge weight between nodes $i$ and   $j$ \\
			$c_{ij}$ & cost of connecting sinks $i$ and $j$, $(i,j) \in E$\\
			$t_{ij}$ & the shortest distance between node $i\in N$ and candidate sink location $j\in F$ \\
			\hline
		\end{tabular}
	\end{center}
	\label{tab:problemnotation}
\end{table}

To better illustrate the problem setting, we provide a toy example  in Figure \ref{fig:toygraph}, where the underlying graph has six nodes and seven edges.
At each node $i$, the deployment cost $f_i$ is given in italic style at the top of the node whereas the demand $d_i$ is given at the bottom of the node in a dashed box.
At each {edge} $(i,j)$, a pair of cost values are given, where the first number refers to the {edge weight} $w_{ij}$ and is given in boldface, and the second number refers to the connection cost $c_{ij}$. 
All costs are represented by the sign \$.

\begin{figure}[H]
	\caption{A toy example.}
	\label{fig:toyExample}
	\begin{subfigure}{0.475\textwidth}		
		\begin{tikzpicture}[scale=0.9]
			\node[draw, circle] (1) at (0,0) {6};
			\node[draw, circle] (2) at (2,2) {2};
			\node[draw, circle] (3) at (4,4) {1};
			\node[draw, circle] (4) at (6,2) {3};
			\node[draw, circle] (5) at (0,6) {4};
			\node[draw, circle] (6) at (8,6) {5};	
			
			\draw(1) edge node[pos=.6, above, sloped] { \textbf{\$3}, \$300} (2);
			\draw(1) edge node[midway, above, sloped] {\textbf{\$5}, \$500} (5);	
			\draw(5) edge node[midway, above, sloped] {\textbf{\$10}, \$1000} (3);
			\draw(2) edge node[pos=.6, above, sloped] {\textbf{\$2}, \$200} (3);
			\draw(3) edge node[pos=.4, above, sloped] { \textbf {\$1}, \$100} (4);
			\draw (3) edge node[midway, above, sloped] { \textbf{\$4}, \$400} (6);
			\draw (4) edge node[midway, above, sloped] {\textbf{\$2}, \$200} (6);	
			
			\node[above,magenta] at (2,2.2) {\textit{\$52000}};
			\node[above,magenta] at (4,4.2) {\textit{\$50500}};
			\node[above,magenta] at (0,6.2) {\textit{\$57500}};
			\node[above,magenta] at (0,0.2) {\textit{\$56000}};
			\node[above,magenta] at (6,2.2) {\textit{\$53000}};
			\node[above,magenta] at (8,6.2) {\textit{\$59000}};
			
			\node[above,draw=green,dashed] at (2,1.2) {170};			
			\node[above,draw=green,dashed] at (4,3.2) {185};
			\node[above,draw=green,dashed] at (0,5.2) {145};
			\node[above,draw=green,dashed] at (0,-0.8) {120};
			\node[above,draw=green,dashed] at (6,1.2) {130};
			\node[above,draw=green,dashed] at (8,5.2) {115};
		\end{tikzpicture}
		\caption{Instance.}
		\label{fig:toygraph}
	\end{subfigure}
	\begin{subfigure}{0.05\textwidth}		
		\hfill
	\end{subfigure}
	\begin{subfigure}{0.55\textwidth}		
		\begin{tikzpicture}[scale=0.9]
			\node[draw, circle] (1) at (0,0) {6};
			\node[draw, circle, fill=red] (2) at (2,2) {2};
			\node[draw, circle, fill=red] (3) at (4,4) {1};
			\node[draw, circle, fill=red] (4) at (6,2) {3};
			\node[draw, circle] (5) at (0,6) {4};
			\node[draw, circle] (6) at (8,6) {5};	
			
			
			\draw[red, thick] (2) -- (3);
			\draw[red, thick] (3) -- (4);
			\draw[blue, dashed] (3) -- (6);
			\draw[blue, dashed] (5) -- (2);
			\draw[blue, dashed] (1) -- (2);
			
			\node[above,white] at (8,6.2) {\textit{\$59,000}};
			\node[above,draw=white,dashed] at (0,-0.8) {\color{white}120};

			
		\end{tikzpicture}
		\caption{A feasible solution with $p=3$.}
		\label{fig:p=3sol}
	\end{subfigure}
\end{figure}

Figure \ref{fig:p=3sol} displays a solution for $p=3$, where $F^*=\{1,2,3\}$ and $E^*=\{(1,2),(1,3)\}$.
The dashed line shows  node-sink assignments. Note that the assignment between node 4 and sink 2 is performed through the shortest path over the network in Figure \ref{fig:toygraph}, {resulting in $t_{42}=w_{46}+w_{62}=\$8$}. The cost breakdown of this solution is as follows:

\begin{itemize}
	\item Deployment: $f_1+f_2+f_3 = 50500 + 52000 + 53000 = \$155500 $
	\item Connection: $c_{12} + c_{13} = 200 + 100 = \$300 $
	\item Access: 
	$ d_4 t_{42} + d_{5} t_{51} + d_6 t_{62}    = 145\times 8 + 115 \times 4 + 120 \times 3 =  \$ 1,980 $
\end{itemize}
Hence, the objective function value $Z$ of the toy example is 155500+300+1980 = \$157780.

\subsection{Problem Complexity}
In the following, we  formally prove the hardness of \texttt{WCpMP}.

\begin{thm} 
	\label{reduction}
	The weighted connected $p$-median problem is NP-Hard.
\end{thm}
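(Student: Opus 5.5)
The plan is a polynomial-time reduction from a known NP-hard problem, showing that the decision version of \texttt{WCpMP} is at least as hard. The simplest source is the classical $p$-median problem on general graphs, which is NP-hard (Kariv and Hakimi). Given a $p$-median instance on a connected graph $H=(V,A)$ with demands $d_i$ and edge lengths $w_{ij}$, I will build a \texttt{WCpMP} instance that neutralizes the connectivity requirement and the extra cost terms.

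\textbf{Construction.} Take $N=V$ and $F=V$, with demands $d_i$ and access costs $t_{ij}$ given by shortest paths under $w$. Set all deployment costs $f_j=0$. The delicate point is that $G^*=(F^*,E^*)$ must be a spanning tree using edges of $E$. If I took $E=A$, arbitrary $p$-subsets might not induce connected subgraphs, and that would alter the problem. To avoid this, I will let the connection network be the complete graph on $F$ with all $c_{ij}=0$. The access metric $t_{ij}$ is still computed from the original lengths $w$ on $A$. If edges of $E$ must carry weights $w$, I will give each added edge weight $w_{ij}=t_{ij}$, which leaves all shortest-path distances unchanged. The paper explicitly permits connection costs to be independent of access weights, and also permits direct input of access costs, so this choice is allowed.

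\textbf{Equivalence.} In the constructed instance, every $F^*\subseteq F$ with $|F^*|=p$ admits a spanning tree in the complete graph, of connection cost $0$. Hence the \texttt{WCpMP} objective of $F^*$ equals $\sum_i d_i\min_{j\in F^*} t_{ij}$, which is exactly the $p$-median objective. Optimal solutions therefore correspond one-to-one, and the construction uses $O(|V|^2)$ edges plus an all-pairs shortest path computation, so it runs in polynomial time. This gives NP-hardness.

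\textbf{Main obstacle.} The main risk is a reading of the problem in which the connection graph must coincide with the access graph, i.e., $E$ carries both $w$ and $c$ and only original edges are allowed. If that objection arises, I would instead reduce from the Steiner tree problem or from connected dominating set. Concretely: make every node a candidate with $f_j=0$, give edge connection costs as in the Steiner instance, and put a huge demand $M$ on terminals so that any terminal not chosen as a sink is prohibitive. Then try each $p$ from $|T|$ to $|V|$. This route needs care because exactly $p$ sinks is fixed and non-terminal sinks must be forced to form a minimum Steiner tree. So I would present the complete-graph reduction as the main proof and mention this alternative as a remark.
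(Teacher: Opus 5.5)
Your reduction is correct, but it takes the opposite route from the paper's. The paper reduces from the decision version of the $k$-minimum spanning tree problem. It keeps the graph and edge costs ($E=E'$, $c_{ij}=c'_{ij}$, $p=k$) and switches off the median part by setting $d_i=f_i=0$ and $t_{ij}=0$. A \texttt{WCpMP} solution of value at most $\zeta$ is then literally a $k$-node tree of weight at most $\zeta'$. You instead switch off the connectivity part (complete candidate network, $c\equiv 0$, $f\equiv 0$), so that on your instances \texttt{WCpMP} coincides with the classical $p$-median problem of Kariv and Hakimi. Your device $w_{ij}=t_{ij}$ on the added edges is sound: any path using a new edge can be replaced by an original shortest path of the same length, so all distances are preserved.

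The paper's route shows that the spanning-tree requirement alone is hard. This holds on the original (possibly sparse) topology and with no demand or deployment costs at all, which isolates the genuinely new ingredient of the problem. Your route shows that \texttt{WCpMP} contains the $p$-median problem as a special case. It also shows the problem stays hard with positive edge weights and nontrivial access costs, whereas the paper's instance needs $t\equiv 0$, i.e.\ a degenerate metric. The price is that your construction needs a complete candidate network, so by itself it says nothing about sparse sink topologies.

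Two small remarks. First, your weighting already satisfies the strict reading in which $E$ carries both $w$ and $c$, and in a reduction you are free to choose $E$. So the ``main obstacle'' you raise does not arise, and the Steiner-tree fallback can be dropped. Second, ``one-to-one'' should read ``value-preserving'': a fixed $F^*$ admits many zero-cost spanning trees, and only the existence of a solution within the bound matters.
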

\begin{proof} 
	Consider the decision version of the weighted connected $p$-median problem, which we denote by \texttt{WCpMP-D}: Does  there exist a feasible solution to Problem~\ref{prob:UpMSTP} with value at most~$\zeta^*$? We  prove that this problem is NP-Complete, from which the assertion of the theorem follows.
	
	We prove the NP-Completeness of \texttt{WCpMP-D} by a reduction from the decision version of the k-minimum spanning tree problem denoted by \texttt{kMSTP-D}, which is known to be NP-Complete \citep{ravi1996spanning}. 
	Consider an instance of \texttt{kMSTP-D}: Given a connected graph $G'=(N',E')$, edge weights $c_{ij}'$ for each $(i,j)\in E'$ and an integer $k$, does there exist  an induced tree $(\bar F, \bar E)$ with $|\bar F|=k$ such that $\sum_{ (i,j)\in \bar E} c_{ij} \le \zeta'$?
	We construct an instance of  \texttt{WCpMP-D}  as follows:
	\[
		N=F=N', \ E=E', \ p=k, \ d_i=f_i=0 \ i \in N', 
		c_{ij} = c_{ij}'\ (i,j)\in E', \ t_{ij}=0 \ i,j\in N', \ \zeta = \zeta' .
	\]
	Note that the size of the \texttt{WCpMP-D}  instance is polynomial in the size of the \texttt{kMSTP-D} instance.
	We now verify that \texttt{WCpMP-D} is feasible if and only if \texttt{kMSTP-D} is feasible.

	($\Rightarrow$) Consider a feasible solution of  \texttt{WCpMP-D} such that  $F^*\subseteq F$ with $|F^*|=p$ and $E^*\subseteq E$ such that the induced graph $(F^*,E^*)$ is a tree and  $\sum_{j\in F^*} f_j + \sum_{i\in N } d_i \min\{t_{ij} : j \in F^*\} + \sum_{(i,j)\in E^*} c_{ij} \le \zeta $. 
	We claim that a feasible solution to \texttt{WCpMP-D} is obtained by setting $\bar F=F^*$ and $\bar E = E^*$. In fact, by construction, we have $|\bar F|=|F^*|=p=k$, $(\bar F, \bar E)$ is a tree and $ \sum_{(i,j)\in \bar E} c_{ij}'  = \sum_{(i,j)\in \bar E^*} c_{ij}  \le \zeta = \zeta'$.

	($\Leftarrow$) Consider a feasible solution of  \texttt{kMSTP-D} such that $(\bar F, \bar E)$ with $|\bar F|=k$ such that $\sum_{ (i,j)\in \bar E} c_{ij} \le \zeta'$. 
	We claim that a feasible solution to \texttt{WCpMP-D} is obtained by setting $F^*=\bar F$ and $ E^*=\bar E$.  In fact, by construction, we have $|F^*|=|\bar F|=k=p$, $(F^*, E^*)$ is a tree and $  \sum_{j\in F^*} f_j + \sum_{i\in N } d_i \min\{t_{ij} : j \in F^*\} + \sum_{(i,j)\in E^*} c_{ij} =  \sum_{(i,j)\in \bar E} c_{ij}'    \le \zeta' = \zeta$. 
\end{proof}

\subsection{Mathematical Programming Models}
\label{sec:solmethods}

In this section, we will present three exact integer programming models of the weighted connected $p$-median problem. We tabulate the common binary decision variables in these models in Table~\ref{tab:commonDecVars}.

\begin{table}[htb]
	\small
	\caption{Common decision variables.}
	\begin{center}
		\begin{tabular}{ll}
			\hline
			$x_{ij}$ & 1 if node {\color{black}{$i \in N$}} is served by sink {\color{black}{$j \in F$}}; 0 otherwise \\
			$y_j$ & 1 if a sink is located at candidate site {\color{black}{$j \in F$}}; 0 otherwise\\
			$z_{ij}$ & 1 if a sink $i$ is connected to sink $j$, $(i,j) \in E, {\color{black}{i,j \in F}}$; 0 otherwise \\
			\hline
		\end{tabular}
	\end{center}
	\label{tab:commonDecVars}
\end{table}

Note that the deployment decisions $y_j$ and assignment decisions $x_{ij}$ are related to the $p$-median aspect of the problem, and the connection decisions $z_{ij}$ are needed to guarantee that the deployed facilities form a spanning tree. Some of the formulations to be described below utilize additional variables, which will be defined as needed.

\subsubsection{Baseline Model} 

We consider a variant of the $p$-median problem in which the selected facilities are required to form a spanning tree. This entails selecting exactly $p-1$ edges between $p$ deployed facilities, however, this alone does not guarantee the required spanning tree property.
Before presenting our exact models, we will first give a formulation that models all the aspects of the problem correctly except the connectedness of the facilities. This so-called \textit{Baseline Model} is the basis of all our exact models and given as follows:
\begin{subequations}   \label{eq:baselineModel}	
	\begin{align}
		\min &\hspace{0.5em}  \sum_{j \in F} f_j y_j + \sum_{i \in N} d_i \sum_{j \in F} t_{ij} x_{ij} + \sum_{(i,j) \in E, i,j \in F} c_{ij} z_{ij} \label{objfunc}\\
		\text{s.t.} &\hspace{0.5em}
		\sum_{j \in F} x_{ij} = 1  & i  \in N	\label{coverage}\\
		&\hspace{0.5em} x_{ij} \le y_j  & i \in N, j \in F \label{assigneligible}\\
		&\hspace{0.5em} \sum_{j \in F} y_j = p  & 	\label{p-facil}\\
		&\hspace{0.5em}  \sum_{(i,j) \in E, i,j \in F} z_{ij} = p-1  &  \label{numberofedges} \\
		&\hspace{0.5em} \sum_{j \in F} z_{ji} \le y_i  & i \in F \label{atmostone1} \\
		&\hspace{0.5em} z_{ij} +  z_{ji}  \le y_i & (i,j) \in E, i,j \in F, i<j \label{eq:selectoneedge-1}\\
		&\hspace{0.5em} z_{ij} +  z_{ji}  \le y_j & (i,j) \in E, i,j \in F, i<j	\label{eq:selectoneedge-2} \\
		&\hspace{0.5em} y_i \le \sum_{j \in \delta_i} z_{ij} + \sum_{j \in \delta_i} z_{ji} & i \in F \label{neighborconselect} \\
		&\hspace{0.5em} y_i \le \sum_{j \in \delta_i} y_{j} & i \in F	\label{neighbfacilselect} \\
		&\hspace{0.5em} x_{ij} \in \{0,1\} & i \in N, j \in F \label{transportvars} \\
		&\hspace{0.5em} y_{j} \in \{0,1\}  & j \in F \label{facilopen} \\
		&\hspace{0.5em} z_{ij} \in \{0,1\}  & (i,j) \in E, i \in F, j \in F  \label{arcvars} .
	\end{align}
\end{subequations}
The objective function \eqref{objfunc} minimizes the total sink deployment cost, access cost and connection cost. Constraint \eqref{coverage} states that each node is assigned to a single sink. Constraint \eqref{assigneligible} ensures that a node is served by a deployed sink. Constraint \eqref{p-facil} is used to make sure that exactly $p$ facilities are deployed. 
{\color{black}{Constraint~\eqref{numberofedges} stipulates the required number of edges in the tree. Constraint~\eqref{atmostone1} makes sure that given that a sink is deployed at a candidate node $i$, at most one incoming arc from a sink to the candidate node is allowed.}}
Given that there are facilities at candidate points $i$ and $j$, constraints \eqref{eq:selectoneedge-1}-\eqref{eq:selectoneedge-2} state that only one of the bilateral arcs between those nodes can be selected. 
{\color{black}{To strengthen the formula, we also add some valid inequalities.}} Utilizing the connection property of a spanning tree, constraint~\eqref{neighborconselect} accounts for the fact that if a sink is deployed, at least one of its adjacent edge must be selected to ensure  connection to another deployed sink. Based upon the same property, constraint~\eqref{neighbfacilselect} guarantees that if a sink is deployed, one of its neighbors must be selected as well. Lastly, variable domain restrictions are presented in constraints~\eqref{transportvars}-\eqref{arcvars}. 

Feasible solutions of problem~\eqref{eq:baselineModel} may not form a spanning tree among the deployed facilities since there might be cycles, or equivalently, the deployed facilities might not be all connected. The three exact formulations we will present below are different in how they handle this issue.

\subsubsection{Dantzig-Fulkerson-Johnson (DFJ) Based Model}
\label{sec:dfjModel}

A straightforward way to ensure the connectedness in formulation~\eqref{eq:baselineModel} is adopted from the Traveling Salesperson Problem (TSP) \citep{dantzig1954solution}. This approach entails the addition of constraint~\eqref{cyclebreaking}, which guarantees that  cycles of size at most $p-1$ are eliminated. 
\begin{equation}
	\sum_{(i,j) \in E, i,j \in W} z_{ij} \le \vert W \vert - 1 \quad W \subset F,  2 \le \vert W \vert < p \label{cyclebreaking}.
\end{equation}  
This way, we reach our first exact model  called the \textit{DFJ-Based Model} as \eqref{eq:baselineModel}, \eqref{cyclebreaking}. 

Note that the number of constraints in~\eqref{cyclebreaking} is exponential in $p$ and $|N|$. Therefore, in practice, these constraints are not added altogether but rather in a lazy fashion within the MILP solver. However, our preliminary experiments indicate that this model is not competitive against the other models introduced before (especially for larger values of $p$), therefore, it is not pursued further in this paper.

\subsubsection{Miller-Tucker-Zemlin  (MTZ) Based Model}
\label{sec:mtzModel}

In order to address the issue that ''constraint type'' \eqref{cyclebreaking} has exponentially many constraints, we formulate an alternative constraint \eqref{mtz1} adopted from {\color{black}{Hop Constrained Minimal Spanning Tree Problem (HMST) 
		\citep{gouveia1995using}}} which originally comes from \citet{miller1960integer} for subtour elimination as follows: 
\begin{align}
	&\hspace{0.5em} u_i - u_j + p z_{ij} \le p -1 & (i,j) \in E, i,j \in F	\label{mtz1} .
\end{align}
Here, $u_{i}$ is a new nonnegative  variable. The idea behind constraint \eqref{mtz1} is that when $z_{ij}=1$, it enforces $u_j \ge u_i + 1$, creating a strictly increasing order of $u$-values along selected arcs. This ordering eliminates subtours, as returning to a previously visited node would violate the monotonicity of the $u$-variables.


As a result, we obtain the second exact formulation called the \textit{MTZ-Based Model}  as \eqref{eq:baselineModel}, \eqref{mtz1}, which will be abbreviated as \texttt{MTZ}.

\subsubsection{Flow-Based Model}
\label{sec:flowModel}

Another way to replace exponentially many cycle-breaking constraints \eqref{cyclebreaking} is to ensure connectivity via a flow-based formulation. Suppose that an external supply of $p$ units will be sent to the network and the artificial data  of a sink $j$ is $y_j$, so that the external supply is delivered precisely to the deployed facilities. 
For this purpose, consider the following formulation:
\begin{subequations}\label{eq:flow based connected}
	\begin{align}
		&  s_j \le y_j & j& \in F \label{rootfacillink} \\
		&   \sum_{j\in F} s_{j} = 1 \label{eq:selectonetreeroot} & \\
		&   \sum_{j\to i} a_{ji} - \sum_{i\to j} a_{ij}   = y_i -  p s_i \quad & i& \in F \label{eq:artificial balance} \\
		& a_{ij} \le (p-1) z_{ij}  & i&,j \in F, (i,j)\in E \label{eq: a_zlink} \\
		&   a_{ij} \ge 0 \quad  &(&i,j)\in E \label{flowvar}  \\
		&  s_j \in \{0,1\}  & j& \in F \label{dummyvar} .
	\end{align}
\end{subequations}
Here, $s_j$ is a binary variable at node $j \in F$ representing whether it is the root of this artificial tree or not, and $a_{ij}$ are artificial flow variables between candidate facilities $i$ and $j$. {\color{black}{Note that unlike \citet{gollowitzer2011mip}, the root node is a decision variable here.}}
Constraint \eqref{rootfacillink} ensures that root of the flow is possible at one of the deployed facilities. Constraint \eqref{eq:selectonetreeroot} enforce that only one sink is selected as the real root of the flow. 
Constraint ~\eqref{eq:artificial balance} is related to flow balance in such a way that a candidate sink $i\in F$ has a unit data if it is deployed. Given that it is the source of flow, it sends $p$ units of flow to the network. Constraint \eqref{eq: a_zlink} represents a link between positive flows and an edge in the original graph. 

As a result, we obtain the third exact formulation called the \textit{Flow-Based Model}  as \eqref{eq:baselineModel}, \eqref{eq:flow based connected}, which will be abbreviated as \texttt{Flow}.

\section{Solution Methodology}
\label{sec:matheuristic}



We observe that solving the MILP models developed in Section~\ref{sec:solmethods} is particularly challenging, especially for large-scale instances. To address this, we propose a four-phase matheuristic algorithm that leverages LP rounding and reduces the sink-level graph to a more manageable size. The overall flow of the algorithm is illustrated in Figure~\ref{fig:flowsol}.

Given the problem parameters and one of the mathematical models introduced in Section~\ref{sec:solmethods}, we begin by solving its LP relaxation in Phase 1. In Phase 2, we filter out nodes and/or edges from the original graph whose associated decision variables have negligible values. Since this process may result in a disconnected graph, Phase 3 reconstructs a connected structure. However, the graph at this stage is only pseudo-feasible, as it may contain more than $p$ nodes or subcycles. Finally, in Phase 4, we restore feasibility by ensuring the correct number of sink nodes and eliminating any subcycles, followed by the assignment of demand nodes to sinks to obtain a feasible solution. 

\begin{figure}[h]
	\caption{Overview of the matheuristic algorithm.}
	\label{fig:flowsol}
	\begin{center}
		\begin{tikzpicture}[node distance=3cm]
			
			\tikzstyle{startstop} = [rectangle, rounded corners, minimum width=2cm, minimum height=1cm, text centered, draw=black, fill=gray!20]
			\tikzstyle{process} = [rectangle, minimum width=2cm, minimum height=1cm, text centered, draw=black, fill=blue!10]
			\tikzstyle{arrow} = [thick,->,>=stealth]
			
			\node (start) [startstop] {Model and Table 2};
			\node (phase1) [process, right of=start, xshift=2cm] {Phase 1: LP Solve};
			\node (phase2) [process, right of=phase1, xshift=2cm] {Phase 2: Filtering};
			\node (phase3) [process, below of=phase2, yshift=-0.05cm] {Phase 3: Pseudo-Feasible Solution};
			\node (phase4) [process, left of=phase3, xshift=-4cm] {Phase 4: Feasible Solution};
			\node (end) [startstop, left of=phase4, xshift=-2cm] {Terminate};
			
			\draw [arrow] (start) -- (phase1);
			\draw [arrow] (phase1) -- (phase2);
			\draw [arrow] (phase2) -- (phase3);
			\draw [arrow] (phase3) -- (phase4);
			\draw [arrow] (phase4) -- (end);
			
		\end{tikzpicture}
	\end{center}
\end{figure}

The overall algorithm has several parameters (e.g., \texttt{model}, \texttt{edgeFilter}, \texttt{nodeFilter}, $\gamma$, $\epsilon$, $\tau$, \texttt{reduction}), which will be introduced as needed. In our algorithms, {whenever there is a tie, we select the element with the smallest index}. Finally, {we use the notation $[i]$ to denote the elements of a set after a certain ordering.}

\subsection{Phase 1:  LP Solve} 

In this phase, we solve the LP relaxation of one of the  mathematical model introduced in Section~\ref{sec:solmethods}. We assume that $F = N$, as candidate sink locations are restricted to the set of demand locations (nodes) in all instances considered. The detailed steps of this phase is given in Algorithm~\ref{alg:solve-lp}. Here, the parameter \texttt{model} denotes the MILP model whose LP relaxation is chosen (either \texttt{MTZ} or \texttt{Flow}).

\begin{algorithm}[H]
	\caption{Phase 1:  LP Solve.}
	\label{alg:solve-lp}
	\begin{algorithmic}
		\REQUIRE \texttt{model} \hfill \COMMENT{Choose an LP relaxation.}
		\ENSURE  $\hat x_{ij}$, $\hat y_i$, $\hat z_{ij}$
		\IF{\texttt{model} is \texttt{MTZ}} 
		\STATE Solve the LP relaxation of Problem~\eqref{eq:baselineModel}, \eqref{mtz1}.
		\ENDIF
		\IF{\texttt{model} is \texttt{Flow}}
		\STATE Solve the LP relaxation of Problem~\eqref{eq:baselineModel}, \eqref{eq:flow based connected}.
		\ENDIF
	\end{algorithmic}
\end{algorithm}

\subsection{Phase 2: Filtering}

Given fractional values obtained from Phase 1, some of them are set to zero due to their low magnitudes. This process, referred to as ``filtering,'' aims to reduce the size of the original graph and obtain a candidate subgraph $G_c$.

Node filtering is controlled by the parameter \texttt{nodeFilter}, which can take one of the following values:
\begin{itemize}
	\item \texttt{LV}: \citet{lin1992approximations}'s filtering algorithm
	\item \texttt{LV-g}: \citet{lin1992geoapproximation}'s filtering algorithm for geometric medians
	\item \texttt{Ch}: \citet{charikar1999constant}'s filtering algorithm for geometric medians
	\item \texttt{False}: No node filtering is applied
\end{itemize}

Similarly, edge filtering is controlled by the parameter \texttt{edgeFilter}, which takes one of the following values:
\begin{itemize}
	\item \texttt{True}: Edge filtering is applied
	\item \texttt{False}: No edge filtering is applied
\end{itemize}

Following the filtering step, we construct candidate structures on the reduced graph. The main steps of this phase are presented in Algorithm~\ref{alg:filtering}. Briefly, Phase 2 constructs a reduced candidate subgraph $G_c=(F_c,E_c)$ from the fractional solution. Initially, we set candidate facilities $F_c$, candidate edges $E_c$, and all subsets to empty, and compute $\hat C_j$ for all $j \in N$, where $\hat C_j$ represents the weighted cost of assigning node $j$ to its medians in the fractional solution $\hat{x}$. Under \texttt{LV-g}, we further expand these subsets whenever they intersect. We then order the subsets by $\hat{C}_j$ and apply a greedy procedure to select nodes into $F_c$ so as to ensure coverage of $N$. If \texttt{nodeFilter} is \texttt{Ch}, we order nodes by $\hat{C}_j$, consolidate demands according to the specified conditions, and obtain a candidate set $N$ consisting of nodes with positive consolidated demand. If its size is larger than $p$,  we select a subset via sorting and a graph-based procedure to construct a dominating set $F_c$; otherwise, we designate the dominating set as the candidate set. In addition, we may apply edge filtering. In this case, we define $E_c$ as the set of edges in $E$ whose corresponding variables $\hat{z}_{ij}$ meet or exceed the threshold $\tau$. If no node filtering is applied, we induce $F_c$ from $E_c$; otherwise, we restrict $E_c$ to edges whose endpoints both lie in $F_c$. The algorithm outputs the candidate subgraph formed by $F_c$, $E_c$, and the subsets $S_j$.

\begin{algorithm}[]
	\caption{Phase 2:  Filtering.}
	\label{alg:filtering}
	\begin{algorithmic}
		\REQUIRE $\hat x_{ij}$, $\hat y_i$, $\hat z_{ij}$, \texttt{nodeFilter}, $\epsilon$, $\gamma$, \texttt{edgeFilter}, $\tau$ \hfill \COMMENT{Choose a filtering approach.}
		\ENSURE $F_c$, $E_c$, $\{S_j\}_{j \in N}$
		
		\STATE Set $F_c = \emptyset$,  $E_c = \emptyset$,
		$S_j = \emptyset$ for $j\in N$,
		%
		and $\hat C_j= \sum_{i \in N}  c_{ij} \hat x_{ij}$ for  $j \in N$.
		\IF{\texttt{nodeFilter} is \texttt{LV} or \texttt{LV-g}}   
		\STATE
		Set   $S_j = \{ i \in N: c_{ij} \le (1+\epsilon) \hat C_i \}$ for each $j\in N$ s.t. $\hat y_j > \gamma$.
		\IF{\texttt{nodeFilter} is \texttt{LV-g}} 
		\FOR{$i,j \in N$}
		\IF{ $i\neq j$ and $S_i \cap S_j \ne \emptyset$}  
		\STATE Set $S_i \leftarrow S_i \cup \{j\}$ and $S_j \leftarrow S_j \cup \{i\}$.   
		\ENDIF
		\ENDFOR
		\STATE Order $S_j$ sets in increasing order of $\hat C_j$.
		\STATE Set $N'=N$.
		\FOR{$i = 1,\dots,|N|$}
		\IF{$V' \cap S_{[i]} \neq \emptyset$}
		\STATE  $F_c \leftarrow F_c \cup \{[i]\}$. 
		\STATE $N' \leftarrow N' \setminus S_{[i]}$. 
		\ENDIF
		\ENDFOR
		\ENDIF
		\ENDIF

		\IF{\texttt{nodeFilter} is \texttt{Ch}}
		\STATE Order  candidate facilities in increasing order of $\hat C_j$.  
		\STATE Set $d'_i = d_i$ for $i\in N$.
		\FOR{$i = 1, \dots, |N|$}
		\FOR{$j = 1 ,\dots, i-1$}
		\IF{$d_{[j]}>0$ and $c_{[i][j]} \le 4 \hat C_{[j]}$}
		\STATE Set $d'_{[j]} = d'_{[i]} + d'_{[j]}$ and $d'_{[i]} = 0$.
		\ENDIF
		\ENDFOR
		\ENDFOR
		\STATE Set $N = \lbrace i \in N \vert d'_i>0\rbrace$.
		\IF{$\vert N  \vert>p$}
		\STATE Find $s_i = \arg\min \{ c_{ij} : j \in N_\}$ for each $i\in N$.
		\STATE Sort the indices in $N$ in decreasing order of value $d_i c_{i,s_i} - f_i$. 
		\STATE Let $N_1$ be the first $2p-\vert N  \vert$ indices in $N$ and $N_2 = N  \setminus N_1$.   
		\STATE Build a graph $H = (N_1 \cup N_2, \{ (i,s_i) : i\in N_2 \})$.
		\STATE Find a dominating set $F_c$ in $H$ such that $F_c \supseteq N_1$. 
		\ELSE
		\STATE $F_c = N$.
		\ENDIF
		
		\ENDIF

		\IF{\texttt{edgeFilter} is \texttt{True}}
		\STATE Set $E_c = \{ (i,j)\in E : \hat z_{ij} \ge \tau\}$.  
		
		\IF{\texttt{nodeFilter} is \texttt{False}}
		\STATE Set $F_c = \{ i \in N : \exists j \in \delta_i : (i,j) \in E_c \}$. 
		\ELSE
		\STATE Set $E_c  \leftarrow \{ (i,j)\in E_c : i \in F_c, j \in F_c \}$.
		\ENDIF
		
		\ENDIF
		
	\end{algorithmic}
\end{algorithm}

\subsection{Phase 3: Pseudo-feasible solution}

In this phase, we aim to construct a pseudo-feasible solution by ensuring the connectivity of the candidate subgraph and enforcing coverage conditions. We define a pseudo-feasible solution as follows:

\begin{dfn}[Pseudo-feasible solution]
	A solution that satisfies all constraints of the MTZ or Flow formulation but may violate constraint~\eqref{p-facil}.
\end{dfn}

To this end, we employ Algorithm~\ref{alg:pseudo-feasible-solution}. The algorithm begins by identifying the connected components of the candidate subgraph $G_c=(F_c,E_c)$. For each pair of components $(\ell,\ell')$, we solve a shortest path problem over the original graph $G$ with unit edge weights. Let $\alpha_{\ell,\ell'}$ denote the length of the shortest such path, and let $V_{\ell,\ell'}$ and $E_{\ell,\ell'}$ denote the corresponding node and edge sets. Using these values, we construct a complete graph on the components $\{1,\dots,L\}$ with edge weights $\alpha_{\ell,\ell'}$ and compute a minimum spanning tree $(F_t,E_t)$. We then update the candidate subgraph by augmenting $F_c$ and $E_c$ with the nodes and edges in this tree, thereby ensuring connectivity. If \texttt{reduction} is enabled and the covering structure is nonempty, we perform additional adjustments. First, for any node in candidate facilities with empty subset, we add this node to its subset.  Next, for any node not yet covered, we assign it to the subset of its closest facility. After obtaining a connected structure together with the associated subsets, we apply the connected set cover algorithm of \citet{ren2011note} to derive a pseudo-feasible solution.

Before presenting the details of the connected set cover algorithm, we introduce the following definitions from \citet{ren2011note}. 

\begin{dfn}
Let $N$ be a finite set and $\mathcal{S} = \{S_i \subseteq N : i = 1, \dots, n\}$ a collection of subsets. Let $G$ be a connected graph with node set $\mathcal{S}$. A connected set cover (CSC) $\mathcal{R}$ is a set cover of $N$ such that $\mathcal{R}$ induces a connected subgraph of $G$.
\end{dfn}

\begin{dfn}
Let $G=(N,E)$ and $\{S_j\}_{j \in F_c}$ be a collection of subsets. Let $Cov = \{S_j : j \in F_c\}$, and let $R \subseteq Cov$ with $R \neq \emptyset$. For any $S \in Cov \setminus R$, an $R$--$S$ path is an ordered sequence $P_S = \{S_0,S_1,\dots,S_k\}$ such that 
(i) $S_0 \in R$, 
(ii) $S_k = S$, and 
(iii) $S_1,\dots,S_k \in Cov \setminus R$.

We denote the consecutive pairs in $P_S$ by $E(P_S)$ and the set of newly covered nodes by $C(P_S)$, i.e., those covered by $P_S$ but not by $R$.
\end{dfn}

\begin{algorithm}[H]
	\caption{Phase 3:  Pseudo-feasible Solution.}
	\label{alg:pseudo-feasible-solution}
	\begin{algorithmic}
		
		\REQUIRE $F_c$, $E_c$, $\{S_j\}_{j \in N}$, \texttt{reduction} 
		\ENSURE A pseudo-feasible solution.
		
		\STATE   Find connected components of $G_c=(F_c, E_c)$ as $G_c^\ell = (F_c^\ell, E_c^\ell)$ for some $\ell=1,\dots,L$. 
		
		\FOR{$1 \le \ell < \ell' \le L$}
		
		\STATE Solve the Shortest Path Problem for each pair in the set $P_{\ell,\ell'} = \{ (i,j) \in F_c^l \times F_c^{l'} \}$ over the original graph $G$ with unit edge weights.
		
		\STATE Let $\alpha_{\ell,\ell'} $ be the length of a minimum shortest path over the set $P_{\ell,\ell'}$, and let
		$N_{\ell,\ell'}$ and $E_{\ell,\ell'}$  be the set of nodes and the set of edges
		in that path, respectively.
		
		\ENDFOR

		\STATE  Obtain a Minimum Spanning Tree $(F_t, E_t)$ over the complete graph with nodes $\{1,\dots,L\}$ and edges weights as $\alpha_{\ell,\ell'}$ for $1 \le \ell < \ell' \le L$.

		\STATE Set $F_c \leftarrow F_c \cup F_t$ and $E_c \leftarrow E_c \cup E_t$. 
		
		\IF{ \texttt{reduction} is True and $\cup_j S_j \neq \emptyset$}
		
		\WHILE {$\exists j\in F_c$ s.t. $S_j = \emptyset$}
		\STATE Set {$S_j \leftarrow S_j \cup \{j\}$}. 
		\ENDWHILE
		
		\WHILE {$ \exists i \in N$ s.t. $i \notin \cup_{j \in F_c} S_j$} 
		\STATE Find $j'= \arg\min \{ c_{ij} : j \in F_c \}$.
		\STATE Set {$S_{j'} \leftarrow S_{j'} \cup \{i\}$}. 
		\ENDWHILE

		\STATE Apply Ren and Zhao's connected set cover algorithm \citep{ren2011note} detailed in Algorithm~\ref{alg:ren-zhao}. 
		\ENDIF
		
	\end{algorithmic}
\end{algorithm}

We now describe the connected set cover in Algorithm \ref{alg:ren-zhao}. Let $J$ denote the selected nodes, $R$ the collection of selected subsets, and $U$ the set of covered nodes. Initially, we select $S_{j'}$ with maximum cardinality and set $J=\{j'\}$, $R=\{S_{j'}\}$, and $U=S_{j'}$. The set of uncovered nodes is represented by $N'$, and $Cov = \{S_j : j \in F_c\}$ denotes the family of all candidate subsets. At each iteration, we consider the remaining sets $R' = Cov \setminus R$. For each subset in the remaining sets that is adjacent to the selected nodes (either via graph adjacency or set intersection),  we compute a shortest $R$--$S_j$ path $P_{S_j}$. Among all such candidates, the algorithm selects the path $P_{S'}$ which minimizes the ratio of path length to the number of nodes covered along the path outside $R$ (newly covered nodes) We then incorporate the selected path $P_{S'}$ into the solution by adding edges in $E(P_{S'})$, updating $J$ with corresponding indices, expanding $R$, and updating $U$ with newly covered nodes. This process continues until all nodes are covered. Finally, we obtain the connected subgraph defined by $F_c = J$ and $E_c = E'_c$.

\begin{algorithm}[H]
	\caption{Ren and Zhao's Algorithm}
	\label{alg:ren-zhao}
	\begin{algorithmic}
		
		\REQUIRE $N$, $\{S_j\}_{j \in F_c}$, $E_c$
		\ENSURE $G_c=(E_c, F_c)$
		\STATE Choose $j'= \arg \max \{|S_j|: j \in F_c\}$.
		\STATE Set $J = \{j'\}, R=\{S_{j'}\}$, $U= S_{j'}$, $N'= N \setminus S_{j'}$, $E'_{c} = \emptyset$, $Cov = \{ S_j : j \in F_c\}$.
		\WHILE {{\color{black}$N' \setminus U \ne \emptyset$}}
		\STATE Set $R' \leftarrow Cov \setminus R$.
		\FOR {$S_j \in R'$}
		\IF {$\delta_j \cap J \ne \emptyset$ or $S_j \cap \{S_i : S_i \in R \} \ne \emptyset$}  
		\STATE Find a shortest $R-S_j$ path $P_{S_j}$. 
		\ENDIF
		\ENDFOR
		\STATE Select $P_{S'} = \arg \min \left\{ \dfrac{|P_S|}{|C(P_S)|} : S \in R', |C(P_S)|>0 \right\}$. 
		\FOR {$ (S_i,S_j) \in E(P_{S'})$}
		\STATE $E'_{c} \leftarrow E'_{c} \cup \{(i,j)\}$.
		\ENDFOR
		\STATE Set $R \leftarrow R \cup P_{S'}$. 
		\FOR {$ S_i \in P_{S'}$}
		\STATE Set $J \leftarrow J \cup \{i\}$.
		\ENDFOR
		\STATE Set $U \leftarrow U \cup C(P_{S'})$. 
		\ENDWHILE
		\STATE Set $F_c \leftarrow J$ and $E_c \leftarrow E'_{c}$.
		
	\end{algorithmic}
	
\end{algorithm}

\subsection{Phase 4: Feasible solution}

In the final phase, we employ Algorithm~\ref{alg:feasible-solution} to obtain a feasible solution. Initially, while the number of selected facilities is less than $p$, the algorithm iteratively augments $F_c$ by adding a node outside the current set that is closest to the selected facilities. The corresponding edges are recorded to update the edge set accordingly until the size of the facilities is equal to $p$. If exactly $p$ facilities are selected, we construct a minimum spanning tree over $G_c$ to ensure connectivity. Each node is then assigned to its closest facility, and the objective function value is computed. Otherwise, we solve a restricted MILP on the reduced subgraph $G_c$. Specifically, if \texttt{model} is \texttt{MTZ}, we solve the formulation given in \eqref{eq:baselineModel} and \eqref{mtz1}. If \texttt{model} is \texttt{Flow}, we instead solve the formulation in \eqref{eq:baselineModel} and \eqref{eq:flow based connected}. The resulting solution is feasible with respect to the original problem.


\begin{algorithm}[]
	\caption{Phase 4:  Feasible Solution.}
	\label{alg:feasible-solution}
	\begin{algorithmic}
		\REQUIRE \texttt{model}, $G_c =(F_c, E_c)$ 
		\ENSURE A feasible solution.
		\WHILE{ $|F_c| < p$ }
		\STATE Find $(i',j') = \arg\min\{ c_{ij} : i \in F_c, j \not\in F_c \}$. 
		\STATE Set $F_c \leftarrow F_c \cup \{ j'  \}$ and $E_c \leftarrow E_c \cup \{ (i',j') \}$. 
		\ENDWHILE
		
		\IF{ $|F_c| = p$ } 
		\STATE Find an MST $(F_c, E_c^*)$ over $G_c$.
		\STATE Assign each node $i \in N$ to the closest sink $j \in F_c$.
		\STATE Compute the objective function value. \\
		
		\ELSE
		\IF{\texttt{model} is \texttt{MTZ}} 
		\STATE Solve MILP~\eqref{eq:baselineModel}, \eqref{mtz1} with $F=F_c$ and $E=E_c$.
		\ENDIF
		\IF{\texttt{model} is \texttt{Flow}}
		\STATE Solve MILP~\eqref{eq:baselineModel}, \eqref{eq:flow based connected} with $F=F_c$ and $E=E_c$.
		\ENDIF
		
		\ENDIF
	\end{algorithmic}
\end{algorithm}

{\color{red}
	{

	}
}

\section{Numerical Results}
\label{sec:numresults}

\subsection{Test Instances}
\label{doe}

In order to test the effectiveness of our techniques, 
%
we use five instance families  with varying characteristics. 
For each instance family and for each number of nodes $n \in \{100, 200, 300, 400\}$, we generate five independent instances. 
	\begin{itemize}
		\item Erdős-Rényi (ER) Graphs: We generate random graphs following the
		Erdős-Rényi  procedure~\citep{erdos1960evolution}, where the probability of selecting an edge between two nodes is set as 0.30.
		\item Barabási-Albert (BA) Graphs: We generate random graphs following \citet{barabasi1999emergence} using the \texttt{networkx} package in Python~\citep{networkx}. These graphs model situations  where a small number of nodes have a large number of neighbors. We use {a preferential attachment parameter value of between 15 and 53}. 
		\item Benchmark (Bench) Graphs: We generate random graphs following \citet{lancichinetti2008benchmark} using the \texttt{networkx} package in Python~\citep{networkx}. These graphs  model communities in the network with different sizes. 
		We use the power law exponent for the degree distribution  value of 3,  the power law exponent for the community size distribution  value of 2 and the fraction of inter-community edges of 0.4. In addition, the desired average degree and the minimum size of communities are adjusted with respect to the number of nodes.
		\item Forest Fire (FF) Graphs:  We generate random graphs following \citep{leskovec2005graphs} using the \texttt{igraph} package in R~\citep{igraph}. 
		%
		%
		We use the forward burning probability  value of 0.5 and backward burning ratio of 0.8. 
		\item OR-Lib p-Median (pMed) Graphs: We use the  p-median graphs adopted from OR-LIB \citep{beasley1985note} by extending with appropriate connection costs.
	\end{itemize}
	We note that BA, Bench and FF Graphs are scale-free.
	%
    For a graph with $n \in \{100, 200, 300, 400\}$ nodes, the parameter $p$ is selected from $\{10, 20, \ldots, n/10\}$ in increments of 10. Using the parameters and procedures outlined above, we generate five samples for each $n$ value of each graph type.
	We report the number of nodes $|N|$, the average number of edges $|E|$ and the average density over five samples in   Table \ref{tab:graphtable}. 
	With respect to the density, we observe that the graph types are in increasing order for 
	pMed, Bench, BA, FF and ER.
	

	
	\setlength{\tabcolsep}{2pt}

	\begin{table}[H]
		\centering
		\footnotesize
		\caption{Statistics of instances with respect to graph types.}
		\label{tab:graphtable}
		\begin{tabular}{|c|ccc|}
			\hline
			{instance}    & $|N|$   & $|E|$       & {density} \\ \hline
			er-100    & 100 & 1477.60  & 0.30    \\
			er-200    & 200 & 5999.00  & 0.30    \\
			er-300    & 300 & 13417.20 & 0.30    \\
			er-400    & 400 & 23949.20 & 0.30    \\
			ba-100    & 100 & 1531.00  & 0.31    \\
			ba-200    & 200 & 4951.00  & 0.25    \\
			ba-300    & 300 & 10171.00 & 0.23    \\
			ba-400    & 400 & 17191.00 & 0.21    \\
			bench-100 & 100 & 870.40   & 0.17    \\
			bench-200 & 200 & 1815.20  & 0.09    \\
			bench-300 & 300 & 4727.40  & 0.11    \\
			bench-400 & 400 & 8570.20  & 0.11    \\
			ff-100    & 100 & 1169.00  & 0.23    \\
			ff-200    & 200 & 5487.00  & 0.27    \\
			ff-300    & 300 & 12611.60 & 0.28    \\
			ff-400    & 400 & 23351.60 & 0.29    \\
			pmed-100  & 100 & 200.00   & 0.04    \\
			pmed-200  & 200 & 800.00   & 0.04    \\
			pmed-300  & 300 & 1800.00  & 0.04    \\
			pmed-400  & 400 & 3200.00  & 0.04    \\ \hline
		\end{tabular}
	\end{table}

	

	

	
	For each edge, the  weight $w_{ij}$ is generated uniformly between 1 and 100 following \citep{beasley1985note}. Afterwards, an all-pairs shortest path algorithm is performed to find the unit access costs $t_{ij}$  from the edge weights $w_{ij}$. 
	{Motivated by the uncapacitated sink location literature \citep{ghosh2003neighborhood,pirkul1998multi,vasko2003large}, we select large deployment costs compared to the access costs}. 
In particular, $f_i$ is selected uniformly between 50000 and 60000 whereas $d_i$ is picked randomly between 100 and 200. 
Finally, {inspired by \citet{swamy2004primal}, the connection cost  $c_{ij}$ is selected $c_{ij}= M w_{ij}$ with $M=100$.}



We solve all instances in C++ using Visual Studio 2022 and CPLEX 22.1.1 solver. The experiments are conducted on a machine with two Intel(R) Xeon(R) Silver 4210R processors, 64 GB RAM, and 32 threads. We set the time limit as two hours for each MILP problem and keep all the other parameters at their default values. 






{\color{red}}

\subsection{Computational Results}

\subsubsection{Exact MILP Results}


In this section, we report the results of the three exact MILP models we formulate in Section~\ref{sec:solmethods} 
in Tables~\ref{tab:p=10}, \ref{tab:p=20}, \ref{tab:p=30} and \ref{tab:p=40} for instances with $p=10$, $p=20$, $p=30$ and $p=40$, respectively. 
We have three key performance indicators (KPI) to compare the MILP models:
\begin{itemize}
	\item Z: The objective function value of the incumbent solution reported by CPLEX upon termination.
	\item \% Gap: The relative optimality gap reported by CPLEX upon termination.
	\item Time: Time in seconds.
\end{itemize}
We note that each of these three indicators are averaged over five samples in the tables below. 


\setlength{\tabcolsep}{2pt}

According to the results reported in Table~\ref{tab:p=10} for the instances with $p=10$, we observe that the {DFJ}-based model implemented using a lazy constraint callback is significantly outperformed with respect to all of the three KPIs compared to   MTZ-based and Flow-based models. Therefore, it is not used for the instances with larger $p$ values. 
We observe that the MTZ-based model outperforms the Flow-based model in all three KPIs on the average. However, for the FF instances, the Flow-based model seems to be more successful.
\begin{table}[H]
	\centering
	\footnotesize
	\caption{MILP results for $p=10$}
	\label{tab:p=10}
	\begin{tabular}{|c|rrr|rrr|rrr|} 
		\hline
		& \multicolumn{3}{c|}{\texttt{DFJ}}         & \multicolumn{3}{c|}{\texttt{MTZ}}          & \multicolumn{3}{c|}{\texttt{Flow}}
		\\ \hline
		{instance} & {Z  \ \ \ \ \  } & {\% Gap} & {Time} & {Z  \ \ \ \ \  } & {\% Gap} & {Time} & {Z  \ \ \ \ \  } & {\% Gap} & {Time}  
		\\ \hline
		er-100    & 628851.80  & 0.01 & 20.40   & 628851.80  & 0.01 & 8.80    & 628851.80  & 0.00 & 11.60   \\
		er-200    & 678889.40  & 0.09 & 2189.40 & 678889.40  & 0.00 & 270.60  & 678889.40  & 0.01 & 649.00  \\
		er-300    & 712896.80  & 0.72 & 6439.20 & 712536.80  & 0.01 & 1293.60 & 715318.40  & 0.88 & 5388.20 \\
		er-400    & 766241.80  & 2.16 & 7239.80 & 760647.00  & 0.66 & 5718.60 & 765785.80  & 1.88 & 7205.40 \\
		ba-100    & 632582.00  & 0.01 & 14.60   & 632582.00  & 0.01 & 6.20    & 632582.00  & 0.00 & 8.00    \\
		ba-200    & 698518.00  & 0.01 & 1061.00 & 698524.00  & 0.01 & 111.40  & 698518.00  & 0.01 & 279.60  \\
		ba-300    & 748406.80  & 0.07 & 3638.60 & 748406.80  & 0.00 & 332.20  & 748406.80  & 0.01 & 987.80  \\
		ba-400    & 791238.60  & 0.60 & 7073.40 & 790514.60  & 0.01 & 1507.00 & 791042.40  & 0.38 & 4049.60 \\
		bench-100 & 706149.60  & 0.00 & 11.60   & 709856.20  & 0.01 & 12.60   & 709856.20  & 0.01 & 37.00   \\
		bench-200 & 957937.20  & 0.01 & 415.00  & 959731.40  & 0.01 & 112.80  & 959731.40  & 0.01 & 269.40  \\
		bench-300 & 934999.80  & 0.07 & 2206.80 & 938250.80  & 0.01 & 461.00  & 938250.80  & 0.01 & 1561.20 \\
		bench-400 & 977576.60  & 0.36 & 6204.20 & 978035.20  & 0.01 & 1008.40 & 978035.20  & 0.01 & 2281.80 \\
		ff-100    & 763265.00  & 0.01 & 140.40  & 763265.00  & 0.01 & 27.80   & 763269.60  & 0.00 & 5.20    \\
		ff-200    & 851811.40  & 1.27 & 6614.20 & 849727.60  & 0.74 & 4488.60 & 849603.60  & 0.00 & 202.20  \\
		ff-300    & 938499.80  & 3.12 & 6605.60 & 917638.00  & 0.74 & 4827.20 & 916277.40  & 0.45 & 2531.20 \\
		ff-400    & 994939.40  & 2.57 & 6186.80 & 979892.00  & 1.18 & 3524.80 & 974002.00  & 0.73 & 3703.40 \\
		pmed-100  & 1298246.00 & 0.01 & 504.80  & 1298246.00 & 0.01 & 45.00   & 1298246.00 & 0.01 & 49.20   \\
		pmed-200  & 1487942.00 & 1.86 & 6813.80 & 1485596.00 & 0.39 & 5468.40 & 1485596.00 & 0.34 & 4146.60 \\
		pmed-300  & 1619056.00 & 4.03 & 7222.40 & 1605230.00 & 0.99 & 5635.40 & 1604586.00 & 1.38 & 6182.80 \\
		pmed-400  & 1773792.00 & 5.23 & 7207.00 & 1735122.00 & 1.99 & 6680.80 & 1754268.00 & 3.61 & 7203.40 \\ \hline
		\textbf{Average} & 948092.00 &	1.11	& 3890.45	& \textbf{943577.13} &	\textbf{0.34}	& \textbf{2077.06}	& 944555.84 &	{0.49}& 2337.63\\
		\hline
	\end{tabular}
\end{table}

Our observations for the instances with $p=20,30,40$ reported in Tables~\ref{tab:p=20}-\ref{tab:p=40} are similar. In terms of primal solution quality and the relative optimality gap, the MTZ-based formulation is slightly better 
while the Flow-based formulation is about 13-19\% faster on average. 

\begin{table}[H]
	\centering
	\footnotesize
	\caption{MILP results for $p=20$}
	\label{tab:p=20}
	\begin{tabular}{|c|rrr|rrr|} 
		\hline		
		& \multicolumn{3}{c|}{\texttt{MTZ}}     & \multicolumn{3}{c|}{\texttt{Flow}}    \\ \hline
		{instance} & {Z  \ \ \ \ \  } & {\% Gap} & {Time} & {Z  \ \ \ \ \  } & {\% Gap} & {Time} \\	\hline
		er-200    & 1162234.00    & 0.01 & 260.20   & 1162234.00    & 0.01     & 628.40   \\
		er-300    & 1189236.00    & 0.01 & 1329.40  & 1189236.00    & 0.01     & 2328.20  \\
		er-400    & 1232326.00    & 0.33 & 6166.60  & 1232480.00    & 0.47     & 6788.20  \\
		ba-200    & 1185758.00    & 0.01 & 89.80    & 1185740.00    & 0.01     & 122.20   \\
		ba-300    & 1227608.00    & 0.01 & 1243.40  & 1227616.00    & 0.01     & 707.00     \\
		ba-400    & 1263790.00    & 0.01 & 2246.00    & 1263894.00    & 0.08     & 3713.40  \\
		bench-200 & 1415278.00    & 0.01 & 603.60   & 1415278.00    & 0.01     & 1661.00    \\
		bench-300 & 1395638.00    & 0.05 & 2494.80  & 1395772.00    & 0.07     & 3570.20  \\
		bench-400 & 1434308.00    & 0.10 & 3321.40  & 1435178.00    & 0.21     & 4030.40  \\
		ff-200    & 1336384.00    & 0.67 & 7270.40  & 1336390.00    & 0.01     & 157.20   \\
		ff-300    & 1394124.00    & 0.59 & 7256.40  & 1393802.00    & 0.01     & 1391.40  \\
		ff-400    & 1448354.00    & 0.37 & 4596.00    & 1448184.00    & 0.03     & 2745.00    \\
		pmed-200  & 1811936.00    & 0.60 & 5854.40  & 1813598.00    & 1.33     & 6926.00    \\
		pmed-300  & 1951690.00    & 1.87 & 7209.20  & 1954734.00    & 2.29     & 7206.00    \\
		pmed-400  & 2073218.00    & 3.00 & 7206.20  & 2077106.00    & 3.15 & 7204.60  \\ \hline
		\textbf{Average} &
		\textbf{1434792.13} &	\textbf{0.51}	& 3809.85	& 1435416.13 &	\textbf{0.51} &	\textbf{3278.61}\\
		\hline
	\end{tabular}
\end{table}


\begin{table}[H]
	\centering
	\footnotesize
	\caption{MILP results for $p=30$}
	\label{tab:p=30}
	\begin{tabular}{|c|rrr|rrr|} 
		\hline		
		& \multicolumn{3}{c|}{\texttt{MTZ}}     & \multicolumn{3}{c|}{\texttt{Flow}}    \\ \hline
		{instance} & {Z  \ \ \ \ \  } & {\% Gap} & {Time} & {Z  \ \ \ \ \  } & {\% Gap} & {Time} \\	\hline
		er-300    & 1682942.00 & 0.01 & 786.80  & 1682942.00 & 0.01 & 700.80  \\
		er-400    & 1720732.00 & 0.04 & 4428.60 & 1720958.00 & 0.11 & 5218.00 \\
		ba-300    & 1721452.00 & 0.01 & 638.60  & 1721478.00 & 0.01 & 588.60  \\
		ba-400    & 1756230.00 & 0.03 & 2229.00 & 1756178.00 & 0.01 & 2519.40 \\
		bench-300 & 1880554.00 & 0.01 & 2172.20 & 1880554.00 & 0.04 & 3622.60 \\
		bench-400 & 1915942.00 & 0.05 & 5068.40 & 1916352.00 & 0.22 & 5275.60 \\
		ff-300    & 1886732.00 & 0.38 & 7236.80 & 1886704.00 & 0.01 & 914.00  \\
		ff-400    & 1941240.00 & 0.29 & 4676.40 & 1940514.00 & 0.03 & 2856.20 \\
		pmed-300  & 2376864.00 & 1.39 & 7204.00 & 2378980.00 & 1.94 & 7204.00 \\
		pmed-400  & 2489740.00 & 2.65 & 7205.20 & 2530764.00 & 4.20 & 7204.20 \\ \hline
		\textbf{Average} &
		\textbf{1937242.80}	& \textbf{0.49}&	4164.60	& 1941542.40 &	0.66	& \textbf{3610.34}\\
		\hline
	\end{tabular}
\end{table}


\begin{table}[H]
	\centering
	\footnotesize
	\caption{MILP results for $p=40$}
	\label{tab:p=40}
	\begin{tabular}{|c|rrr|rrr|} 
		\hline		
		& \multicolumn{3}{c|}{\texttt{MTZ}}     & \multicolumn{3}{c|}{\texttt{Flow}}    \\ \hline
		{instance} & {Z  \ \ \ \ \  } & {\% Gap} & {Time} & {Z  \ \ \ \ \  } & {\% Gap} & {Time} \\	\hline
		er-400     & 2218522.00 & 0.02    & 4212.00    & 2218588.00 & 0.05    & 3747.40 \\
		ba-400     & 2255168.00 & 0.02    & 1928.60    & 2255190.00 & 0.01    & 2526.00 \\
		bench-400  & 2408006.00 & 0.03    & 5636.40    & 2408006.00 & 0.02    & 4240.00 \\
		ff-400     & 2439698.00 & 0.20    & 5842.20    & 2439692.00 & 0.01    & 2387.60 \\
		pmed-400   & 2937358.00 & 2.14    & 7208.20    & 2959232.00 & 2.88    & 7204.40 \\ \hline
		\textbf{Average} &
		\textbf{2451750.40} &	\textbf{0.48}	& 4965.48 &	2456141.60 &	0.59 &	\textbf{4021.08} \\
		\hline
	\end{tabular}
\end{table}

In all the experiments, we observe that the computation time increases with the number of nodes $|N|$, as expected. On the other hand, the effect of the parameter $p$ on the computation time is less clear as it might be easier or harder to solve the same instance with a larger value of $p$. 
The results also suggest that \% Gap increases with  the parameter $p$. 
Finally, we note that the pMed instances are the hardest because of their large \% Gap and Time, followed by the FF instances. The BA instances are  the easiest to solve. For $p\le 20$, solving the Bench instances is easier whereas models perform better under for the ER instances when $p \ge 30$. 



Recall that the objective function contains three cost components. Next, we analyze the cost breakdown with respect to these cost components.
Figure \ref{fig:costBreakdown} comprises four different charts with respect to different values of $p$ {for the solutions obtained from the MTZ-based model}. 
Based on this figure, we have the following observations:
\begin{itemize}
	\item For the fixed value of $p$, the access cost increases with the number of nodes $|N|$.
	\item As the parameter $p$ directly affects the number of sinks to be deployed and the number of edges to be connected, the connection and deployment costs  increase  with~$p$.
	\item  For the fixed value of $|N|$, the access cost decreases with $p$. This is due to the fact that having a larger number of sinks allows for each demand node to be connected to a close-by sink at a smaller access cost.
	%
	\item The total cost is  larger for the sparser graph types coming from
	Bench and pMed instances due to the increased access cost. 
	\item For the instances with $p=10$, the deployment cost has a significant contribution to the objective function, especially for ER and BA instances. For the Bench instances, deployment and access costs are similar  whereas the access cost constitutes the largest  portion for the  pMed instances. {This observation might be explained by the density of each graph type.}
	%
	%
	\item For the instances with larger $p$,  the deployment cost dominates the access cost. This is due to the fact that the deployment cost increases with $p$ while the access cost decreases $p$ (for fixed $|N|$) as explained above.
	\item For all the instances and the choice of $p$ parameter, the connection cost is the smallest component in the objective function.
\end{itemize}



\begin{figure}
	\caption{Cost breakdown under the MTZ formulation.}
	\label{fig:costBreakdown}
	\pgfplotstableread{
		Label trans connect deploy
		er-100	102232.60	10620.00	515999.20
		er-200	157410.20	10220.00	511259.20
		er-300	192478.20	9320.00	510738.60
		er-400	236650.40	8300.00	515696.60
		ba-100	105011.60	8500.00	519070.40
		ba-200	172118.40	10480.00	515925.60
		ba-300	224800.60	9080.00	514526.20
		ba-400	265566.60	8480.00	516468.00
		bench-100	168732.20	17480.00	523644.00
		bench-200	411860.60	20580.00	527290.80
		bench-300	396362.00	14780.00	527108.80
		bench-400	433724.20	16540.00	527770.40
		ff-100	215469.00	22720.00	525076.00
		ff-200	308708.00	15860.00	525159.60
		ff-300	383424.40	16200.00	518013.60
		ff-400	453607.00	9000.00	517284.20
		pmed-100	702787.80	53920.00	541539.60
		pmed-200	904520.80	42840.00	538233.60
		pmed-300	1030927.40	36040.00	538260.60
		pmed-400	1166478.00	35140.00	533503.80
		
	}\testdata
	
	\begin{subfigure}{\textwidth}
		\centering
		\begin{tikzpicture}
			
			\begin{axis}[
				title={$p=10$},
				ybar stacked,
				ymin=0,
				ymax=3000000,
				xtick=data,
				width=16cm, height=6cm,
				legend style={cells={anchor=west}, legend pos=north west},
				reverse legend=true,
				xticklabels from table={\testdata}{Label},
				xticklabel style={text width=2cm,align=center,rotate=90},            
				]
				\addplot [fill=white] table [y=trans, meta=Label, x expr=\coordindex] {\testdata};
				\addlegendentry{Access Cost}
				
				\addplot [fill=gray!80, pattern=dots, point meta=y] table [y=deploy, meta=Label, x expr=\coordindex] {\testdata};
				\addlegendentry{Deployment Cost}
				
				\addplot [fill=black] table [y=connect, meta=Label, x expr=\coordindex] {\testdata};
				\addlegendentry{Connection Cost}
				
			\end{axis}
		\end{tikzpicture}
	\end{subfigure}

	\pgfplotstableread{
		Label trans connect deploy
		er-200	126524.20	16180.00	1019528.00
		er-300	156267.80	15120.00	1017848.00
		er-400	198460.40	15100.00	1018764.00
		ba-200	142038.60	16140.00	1027580.00
		ba-300	189854.40	13840.00	1023912.00
		ba-400	225545.00	13680.00	1024566.00
		bench-200	338675.00	33840.00	1042764.00
		bench-300	332609.40	25000.00	1038028.00
		bench-400	374280.20	21320.00	1038710.00
		ff-200	260956.20	31720.00	1043706.00
		ff-300	329518.40	29420.00	1035188.00
		ff-400	392980.40	25260.00	1030110.00
		pmed-200	675103.40	75460.00	1061372.00
		pmed-300	834317.00	58500.00	1058872.00
		pmed-400	943664.00	61320.00	1068234.00
	}\testdata
	
	\begin{subfigure}{\textwidth}
		\centering
		\begin{tikzpicture}
			\begin{axis}[
				title={$p=20$},
				ybar stacked,
				ymin=0,
				ymax=3000000,
				xtick=data,
				width=12cm, height=6cm,
				legend style={cells={anchor=west}, legend pos=north west},
				reverse legend=true,
				xticklabels from table={\testdata}{Label},
				xticklabel style={text width=2cm,align=center,rotate=90},
				]
				\addplot [fill=white] table [y=trans, meta=Label, x expr=\coordindex] {\testdata};
				
				\addplot [fill=gray!80, pattern=dots, point meta=y] table [y=deploy, meta=Label, x expr=\coordindex] {\testdata};
				
				\addplot [fill=black] table [y=connect, meta=Label, x expr=\coordindex] {\testdata};
				
			\end{axis}
		\end{tikzpicture}
	\end{subfigure}

	\pgfplotstableread{
		Label trans connect deploy
		er-300	135522.00	19800.00	1527622.00
		er-400	177076.00	17080.00	1526576.00
		ba-300	171195.80	18100.00	1532158.00
		ba-400	205422.20	15800.00	1535006.00
		bench-300	300865.00	30360.00	1549330.00
		bench-400	339802.80	27680.00	1548460.00
		ff-300	301481.60	40080.00	1545168.00
		ff-400	358543.60	38780.00	1543918.00
		pmed-300	709346.20	80500.00	1587018.00
		pmed-400	821695.20	76620.00	1591426.00
	}\testdata
	
	\begin{subfigure}{0.55\textwidth}
		\centering
		\begin{tikzpicture}
			
			\begin{axis}[
				title={$p=30$},
				ybar stacked,
				ymin=0,
				ymax=3000000,
				xtick=data,
				width=8cm, height=6cm,
				legend style={cells={anchor=west}, legend pos=north west},
				reverse legend=true,
				xticklabels from table={\testdata}{Label},
				xticklabel style={text width=2cm,align=center,rotate=90},
				]
				\addplot [fill=white] table [y=trans, meta=Label, x expr=\coordindex] {\testdata};
				
				\addplot [fill=gray!80, pattern=dots, point meta=y] table [y=deploy, meta=Label, x expr=\coordindex] {\testdata};
				
				\addplot [fill=black] table [y=connect, meta=Label, x expr=\coordindex] {\testdata};
				
			\end{axis}
		\end{tikzpicture}
	\end{subfigure}
	\pgfplotstableread{
		Label trans connect deploy
		er-400	161868.60	20520.00	2036134.00
		ba-400	191826.60	20720.00	2042620.00
		bench-400	313986.20	35460.00	2058560.00
		ff-400	342814.80	43180.00	2053704.00
		pmed-400	737305.00	89360.00	2110692.00
	}\testdata
	\begin{subfigure}{0.45\textwidth}
		\centering
		
		\begin{tikzpicture}
			
			\begin{axis}[
				title={$p=40$},
				ybar stacked,
				ymin=0,
				ymax=3000000,
				xtick=data,
				width=5.25cm, height=6cm,
				legend style={cells={anchor=west}, legend pos=north west},
				reverse legend=true,
				xticklabels from table={\testdata}{Label},
				xticklabel style={text width=2cm,align=center,rotate=90},
				legend style={legend columns=-1},
				]
				\addplot [fill=white] table [y=trans, meta=Label, x expr=\coordindex] {\testdata};
				
				\addplot [fill=gray!80, pattern=dots, point meta=y] table [y=deploy, meta=Label, x expr=\coordindex] {\testdata};
				
				\addplot [fill=black] table [y=connect, meta=Label, x expr=\coordindex] {\testdata};
				
			\end{axis}
		\end{tikzpicture}
	\end{subfigure}
	
\end{figure}

\newpage

\subsubsection{Heuristic Results}

In this section, we report the results of the heuristic approaches we develop in our paper. Based on the selections of parameters \texttt{model}, \texttt{nodeFilter}, \texttt{edgeFilter} and     \texttt{reduction} in Algorithms~\ref{alg:solve-lp}-\ref{alg:feasible-solution}, we have 22 different versions as summarized in Table~\ref{tab:param-setting-heuristic}. We note that we fix the constants as $\epsilon=1$, $\gamma=10^{-5}$ and $\tau=10^{-2}$ in our experiments.

\begin{table}[H]
	\centering
	\caption{Table of parameter settings for the heuristic approach. The markers are used in Figures~\ref{fig:deltaGap} and \ref{fig:lpGap}.}
	\label{tab:param-setting-heuristic}
	\begin{tabular}{|c|c|c|c|c|c|}
		\hline
		Version  &   \texttt{model} & \texttt{nodeFilter} & \texttt{edgeFilter} &     \texttt{reduction} & Marker \\
		\hline
		1&    \texttt{Flow} &      \texttt{False} &       \texttt{True} &      \texttt{False} & \multirow{2}{*}{\begin{tikzpicture}[scale=0.01]
				\begin{axis}   
					[xmin=-1,xmax=1,ymin = -1,ymax = 1,xtick={},ytick={}] 
					\addplot[mark size=500pt, mark=triangle*, blue] coordinates {(0,0)};
				\end{axis}
		\end{tikzpicture}}\\
		\cline{1-5}
		2&     \texttt{MTZ} &      \texttt{False} &       \texttt{True} &      \texttt{False} & \\
		\hline
		3&     \texttt{Flow} &         \texttt{LV} &       \texttt{True} &      \texttt{False} & \multirow{2}{*}{\begin{tikzpicture}[scale=0.01]
				\begin{axis}   
					[xmin=-1,xmax=1,ymin = -1,ymax = 1,xtick={},ytick={}] 
					\addplot[mark size=500pt, mark=halfdiamond*, mark color=black, fill=white] coordinates {(0,0)};
				\end{axis}
		\end{tikzpicture}}\\
		\cline{1-5}
		4&    \texttt{MTZ} &         \texttt{LV} &       \texttt{True} &      \texttt{False} & \\
		\hline
		5&    \texttt{Flow} &         \texttt{LV} &       \texttt{True} &       \texttt{True} & \multirow{2}{*}{\begin{tikzpicture}[scale=0.01]
				\begin{axis}   
					[xmin=-1,xmax=1,ymin = -1,ymax = 1,xtick={},ytick={}] 
					\addplot[mark size=500pt, mark=halfdiamond*, mark color=black, fill=black] coordinates {(0,0)};
				\end{axis}
		\end{tikzpicture}}\\
		\cline{1-5}
		6&    \texttt{MTZ} &         \texttt{LV} &       \texttt{True} &       \texttt{True} & \\
		\hline
		7&    \texttt{Flow} &         \texttt{LV} &      \texttt{False} &      \texttt{False} & \multirow{2}{*}{\begin{tikzpicture}[scale=0.01]
				\begin{axis}   
					[xmin=-1,xmax=1,ymin = -1,ymax = 1,xtick={},ytick={}] 
					\addplot[mark size=500pt, mark=halfdiamond*, mark color=white, fill=white] coordinates {(0,0)};
				\end{axis}
		\end{tikzpicture}}\\
		\cline{1-5}
		8&     \texttt{MTZ} &         \texttt{LV} &      \texttt{False} &      \texttt{False} & \\
		\hline
		9&    \texttt{Flow} &         \texttt{LV} &      \texttt{False} &       \texttt{True} & \multirow{2}{*}{\begin{tikzpicture}[scale=0.01]
				\begin{axis}   
					[xmin=-1,xmax=1,ymin = -1,ymax = 1,xtick={},ytick={}] 
					\addplot[mark size=500pt, mark=halfdiamond*, mark color=white, fill=black] coordinates {(0,0)};
				\end{axis}
		\end{tikzpicture}}\\
		\cline{1-5}
		10&     \texttt{MTZ} &         \texttt{LV} &      \texttt{False} &       \texttt{True} & \\
		\hline
		11&    \texttt{Flow} &       \texttt{LV-g} &       \texttt{True} &      \texttt{False} & \multirow{2}{*}{\begin{tikzpicture}[scale=0.01]
				\begin{axis}   
					[xmin=-1,xmax=1,ymin = -1,ymax = 1,xtick={},ytick={}] 
					\addplot[mark size=500pt, mark=halfcircle*, fill=red, mark color=white] coordinates {(0,0)};
				\end{axis}
		\end{tikzpicture}}\\
		\cline{1-5}
		12&     \texttt{MTZ} &       \texttt{LV-g} &       \texttt{True} &      \texttt{False} & \\
		\hline
		13&    \texttt{Flow} &       \texttt{LV-g} &       \texttt{True} &       \texttt{True} & \multirow{2}{*}{\begin{tikzpicture}[scale=0.01]
				\begin{axis}   
					[xmin=-1,xmax=1,ymin = -1,ymax = 1,xtick={},ytick={}] 
					\addplot[mark size=500pt, mark=halfcircle*, fill=red, mark color=red] coordinates {(0,0)};
				\end{axis}
		\end{tikzpicture}}\\
		\cline{1-5}
		14&     \texttt{MTZ} &       \texttt{LV-g} &       \texttt{True} &       \texttt{True} & \\
		\hline
		15&    \texttt{Flow} &       \texttt{LV-g} &      \texttt{False} &      \texttt{False} & \multirow{2}{*}{\begin{tikzpicture}[scale=0.01]
				\begin{axis}   
					[xmin=-1,xmax=1,ymin = -1,ymax = 1,xtick={},ytick={}] 
					\addplot[mark size=500pt, mark=halfcircle*, fill=white, mark color=white] coordinates {(0,0)};
				\end{axis}
		\end{tikzpicture}}\\
		\cline{1-5}
		16&     \texttt{MTZ} &       \texttt{LV-g} &      \texttt{False} &      \texttt{False} & \\
		\hline
		17&    \texttt{Flow} &       \texttt{LV-g} &      \texttt{False} &       \texttt{True} & \multirow{2}{*}{\begin{tikzpicture}[scale=0.01]
				\begin{axis}   
					[xmin=-1,xmax=1,ymin = -1,ymax = 1,xtick={},ytick={}] 
					\addplot[mark size=500pt, mark=halfcircle*, fill=white, mark color=red] coordinates {(0,0)};
				\end{axis}
		\end{tikzpicture}}\\
		\cline{1-5}
		18&     \texttt{MTZ} &       \texttt{LV-g} &      \texttt{False} &       \texttt{True} & \\
		\hline
		19&     \texttt{Flow} &         \texttt{Ch} &       \texttt{True} &      \texttt{False} & \multirow{2}{*}{\begin{tikzpicture}[scale=0.01]
				\begin{axis}   
					[xmin=-1,xmax=1,ymin = -1,ymax = 1,xtick={},ytick={}] 
					\addplot[mark size=500pt, mark=halfsquare*, mark color=green, fill=white] coordinates {(0,0)};
				\end{axis}
		\end{tikzpicture}}\\
		\cline{1-5}
		20&      \texttt{MTZ} &         \texttt{Ch} &       \texttt{True} &      \texttt{False} & \\
		\hline
		21&     \texttt{Flow} &         \texttt{Ch} &      \texttt{False} &      \texttt{False} & \multirow{2}{*}{\begin{tikzpicture}[scale=0.01]
				\begin{axis}   
					[xmin=-1,xmax=1,ymin = -1,ymax = 1,xtick={},ytick={}] 
					\addplot[mark size=500pt, mark=halfsquare*, mark color=white, fill=white] coordinates {(0,0)};
				\end{axis}
		\end{tikzpicture}}\\
		\cline{1-5}
		22&      \texttt{MTZ} &         \texttt{Ch} &      \texttt{False} &      \texttt{False} &  \\
		\hline
	\end{tabular}  
\end{table}

We have three KPIs to compare the heuristics:
\begin{itemize}
	\item Time: Time in seconds.
	\item \% Primal Gap: Computed as $100 \times \frac{z_{\text{Heur}}-z_{\text{MILP}}}{ z_{\text{MILP}}}$, where $z_{\text{Heur}}$ is the objective function value given by a heuristic obtained in Algorithm~\ref{alg:feasible-solution} and $z_{\text{MILP}}$ is the best objective function value given by the MILP models. 
	\item \% Dual Gap: Computed as $100 \times 
	\frac{z_{\text{Heur}}-z_{\text{LP}}}{ z_{\text{Heur}}}$, where $z_{\text{LP}}$ is the objective function value of the LP relaxation obtained in Algorithm~\ref{alg:solve-lp}. 
\end{itemize}
Notice that \% Primal Gap measures the relative difference of the objective value of the heuristic solution with respect to the best known solution obtained via any of the MILP models. This metric is only applicable if the MILP models are solved in advance. On the other hand,  \% Dual Gap can be computed even if the solutions of the  MILP models are not available.

\newcommand\labelcolor{white}

\begin{figure}[H]
	\caption{\% Primal Gap vs. Time. The markers in the rectangle with solid (resp. dashed) border use  the \texttt{MTZ} (resp. \texttt{Flow}) formulation.}
	\label{fig:deltaGap}
	
	\begin{subfigure}{.5\textwidth}
		\begin{tikzpicture} 
			\begin{axis}[
				title={{$p=10$}},
				xmin=-0.25, 
				xmax=10.75,
				ymin=10, 
				ymax=12000,
				ymode=log,
				ylabel={Time (s)},
				scatter/classes={
					t={scale=1.95,mark=triangle*, blue}, 
					d00={scale=1.95,mark=halfdiamond*, mark color=white, fill=white}, 
					d01={scale=1.95,mark=halfdiamond*, mark color=white, fill=black}, 
					d10={scale=1.95,mark=halfdiamond*, mark color=black, fill=white}, 
					d11={scale=1.95,mark=halfdiamond*, mark color=black, fill=black}, 
					c00={scale=1.95,mark=halfcircle*, fill=white, mark color=white}, 
					c01={scale=1.95,mark=halfcircle*, fill=white, mark color=red}, 
					c10={scale=1.95,mark=halfcircle*, fill=red, mark color=white}, 
					c11={scale=1.95,mark=halfcircle*, fill=red, mark color=red}, 
					s00={scale=1.95,mark=halfsquare*, mark color=white, fill=white},
					s10={scale=1.95,mark=halfsquare*, mark color=green, fill=white}
				},
				] 
				\addplot[
				scatter, 
				only marks,
				scatter src=explicit symbolic,
				nodes near coords*={\annotvalue},
				node near coord style={ anchor=\anchorvalue, font=\scriptsize, color=\labelcolor},
				visualization depends on={value \thisrow{annotation} \as \annotvalue },
				visualization depends on={  value \thisrow{anchor} \as \anchorvalue },
				]
				table[meta=label] {
					x       y            label  annotation  anchor
					1.046	206.50	t	1	south
					1.047	62.15	t	2   	south
					2.811	174.55	d10	3 south
					2.746	32.17	d10	4 south
					3.102	177.33	d11	5	south
					3.048	33.22	d11	6    south
					4.531	181.68	d00	7	south
					4.394	32.41	d00	8	south
					4.845	171.36	d01	9	south
					4.715	35.40	d01	10	south
					4.091	181.94	c10	11	south
					4.119	32.59	c10	12	south
					4.623	168.19	c11	13	south
					4.666	33.33	c11	14	south
					5.665	176.09	c00	15	south
					5.588	32.79	c00	16	south
					5.910	190.72	c01	17	south
					5.825	33.93	c01	18	south
					9.343	187.32	s10	19	south
					9.183	32.43	s10	20	south
					10.148	185.97	s00	21	south
					10.079	32.90	s00	22	south
				};
				
				\addplot[magenta, dashed] coordinates
				{(0.8,100) (10.5,100) (10.5,300) (0.8,300) (0.8,100)};
				
				\addplot[cyan] coordinates
				{(0.8,25) (10.5,25) (10.5,80) (0.8,80) (0.8,25)};
			\end{axis}
			
		\end{tikzpicture}
	\end{subfigure}
	\begin{subfigure}{.5\textwidth}
		\begin{tikzpicture}
			
			\begin{axis}[
				title={{$p=20$}},
				xmin=-0.25, 
				xmax=10.75,
				ymin=10, 
				ymax=12000,
				ymode=log,
				scatter/classes={
					t={scale=1.95,mark=triangle*, blue}, 
					d00={scale=1.95,mark=halfdiamond*, mark color=white, fill=white}, 
					d01={scale=1.95,mark=halfdiamond*, mark color=white, fill=black}, 
					d10={scale=1.95,mark=halfdiamond*, mark color=black, fill=white}, 
					d11={scale=1.95,mark=halfdiamond*, mark color=black, fill=black}, 
					c00={scale=1.95,mark=halfcircle*, fill=white, mark color=white}, 
					c01={scale=1.95,mark=halfcircle*, fill=white, mark color=red}, 
					c10={scale=1.95,mark=halfcircle*, fill=red, mark color=white}, 
					c11={scale=1.95,mark=halfcircle*, fill=red, mark color=red}, 
					s00={scale=1.95,mark=halfsquare*, mark color=white, fill=white},
					s10={scale=1.95,mark=halfsquare*, mark color=green, fill=white}
				},
				] 
				
				\addplot[
				scatter, 
				only marks,
				scatter src=explicit symbolic,
				nodes near coords*={\annotvalue},
				node near coord style={ anchor=\anchorvalue, font=\scriptsize, color=\labelcolor},
				visualization depends on={value \thisrow{annotation} \as \annotvalue },
				visualization depends on={  value \thisrow{anchor} \as \anchorvalue },
				]
				table[meta=label] {
					x       y            label  annotation  anchor
					0.446	877.55	t	1	south
					0.449	1310.03	t	2	south
					2.109	470.80	d10	3 north
					2.214	80.05	d10	4 north
					2.524	499.92	d11	5	south
					2.628	84.55	d11	6	south
					5.641	475.05	d00	7	south
					5.743	80.49	d00	8	south
					6.179	462.12	d01	9	south
					6.239	85.72	d01	10	south
					3.926	467.23	c10	11	south
					4.056	79.79	c10	12	south
					4.570	468.48	c11	13	south
					4.714	84.68	c11	14	south
					6.700	463.77	c00	15	south
					6.658	80.16	c00	16	south
					7.035	484.57	c01	17	south
					7.079	83.68	c01	18	south
					7.282	510.55	s10	19	south
					7.445	79.68	s10	20	south
					8.952	466.17	s00	21	south
					8.888	81.51	s00	22	south
				};

				\addplot[magenta, dashed] coordinates
				{(0.2,300) (9.7,300) (9.7,1100) (0.2,1100) (0.2,300)};
				
				
				\addplot[cyan] coordinates
				{(1.2,50) (9.7,50) (9.7,150) (1.2,150) (1.2,50)};
				
				
				\addplot[cyan] coordinates
				{(0.2,1150) (0.7,1150) (0.7,1600) (0.2,1600) (0.2,1150)};
				
			\end{axis}
			
		\end{tikzpicture}
	\end{subfigure}
	
	\begin{subfigure}{.5\textwidth}
		\begin{tikzpicture} 
			\begin{axis}[
				title={{$p=30$}},
				xmin=-0.25, 
				xmax=10.75,
				ymin=10, 
				ymax=12000,
				ymode=log,
				xlabel={\% Primal Gap}, 
				ylabel={Time (s)},
				scatter/classes={
					t={scale=1.95,mark=triangle*, blue}, 
					d00={scale=1.95,mark=halfdiamond*, mark color=white, fill=white}, 
					d01={scale=1.95,mark=halfdiamond*, mark color=white, fill=black}, 
					d10={scale=1.95,mark=halfdiamond*, mark color=black, fill=white}, 
					d11={scale=1.95,mark=halfdiamond*, mark color=black, fill=black}, 
					c00={scale=1.95,mark=halfcircle*, fill=white, mark color=white}, 
					c01={scale=1.95,mark=halfcircle*, fill=white, mark color=red}, 
					c10={scale=1.95,mark=halfcircle*, fill=red, mark color=white}, 
					c11={scale=1.95,mark=halfcircle*, fill=red, mark color=red}, 
					s00={scale=1.95,mark=halfsquare*, mark color=white, fill=white},
					s10={scale=1.95,mark=halfsquare*, mark color=green, fill=white}
				},
				] 
				
				\addplot[
				scatter, 
				only marks,
				scatter src=explicit symbolic,
				nodes near coords*={\annotvalue},
				node near coord style={ anchor=\anchorvalue, font=\scriptsize, color=\labelcolor},
				visualization depends on={value \thisrow{annotation} \as \annotvalue },
				visualization depends on={  value \thisrow{anchor} \as \anchorvalue },
				]
				table[meta=label] {
					x       y            label  annotation  anchor
					0.182	2393.24	t	1	south
					0.198	3451.82	t	2	south
					1.747	998.36	d10	3 south
					1.713	163.92	d10	4	south
					2.115	1070.00	d11	5	south
					2.093	177.24	d11	6	south
					6.276	1008.74	d00	7	south
					6.168	162.42	d00	8 south
					6.532	997.10	d01	9	south
					6.447	176.74	d01	10	south
					4.287	1019.46	c10	11	south
					4.215	162.06	c10	12	south
					4.809	988.56	c11	13	south
					4.764	174.38	c11	14	south
					7.292	973.42	c00	15	south
					7.198	162.84	c00	16	south
					7.448	1032.54	c01	17	south
					7.453	173.06	c01	18	south
					6.700	1054.72	s10	19	south
					6.748	160.86	s10	20	south
					8.746	1026.62	s00	21	south
					8.683	165.32	s00	22	south
				};

				\addplot[magenta, dashed] coordinates
				{(-0.02,700) (9.7,700) (9.7,2900) (-0.02,2900) (-0.02,700)};
				
				
				\addplot[cyan] coordinates
				{(1.2,120) (9.7,120) (9.7,360) (1.2,360) (1.2,120)};
				
				
				\addplot[cyan] coordinates
				{(-0.02,3000) (0.45,3000) (0.45,4500) (-0.02,4500) (-0.02,3000)};
			\end{axis}
			
		\end{tikzpicture}
	\end{subfigure}
	\begin{subfigure}{.5\textwidth}
		\begin{tikzpicture}
			
			\begin{axis}[
				title={{$p=40$}},
				xmin=-0.25, 
				xmax=10.75,
				ymin=10, 
				ymax=12000,
				ymode=log,
				xlabel={\% Primal Gap}, 
				scatter/classes={
					t={scale=1.95,mark=triangle*, blue}, 
					d00={scale=1.95,mark=halfdiamond*, mark color=white, fill=white}, 
					d01={scale=1.95,mark=halfdiamond*, mark color=white, fill=black}, 
					d10={scale=1.95,mark=halfdiamond*, mark color=black, fill=white}, 
					d11={scale=1.95,mark=halfdiamond*, mark color=black, fill=black}, 
					c00={scale=1.95,mark=halfcircle*, fill=white, mark color=white}, 
					c01={scale=1.95,mark=halfcircle*, fill=white, mark color=red}, 
					c10={scale=1.95,mark=halfcircle*, fill=red, mark color=white}, 
					c11={scale=1.95,mark=halfcircle*, fill=red, mark color=red}, 
					s00={scale=1.95,mark=halfsquare*, mark color=white, fill=white},
					s10={scale=1.95,mark=halfsquare*, mark color=green, fill=white}
				},
				] 
				
				\addplot[
				scatter, 
				only marks,
				scatter src=explicit symbolic,
				nodes near coords*={\annotvalue},
				node near coord style={ anchor=\anchorvalue, font=\scriptsize, color=\labelcolor},
				visualization depends on={value \thisrow{annotation} \as \annotvalue },
				visualization depends on={  value \thisrow{anchor} \as \anchorvalue },
				]
				table[meta=label] {
					x       y            label  annotation  anchor
					0.094	6242.24	t	1	south
					0.079	9029.84	t	2	south
					1.437	2418.64	d10	3	south
					1.361	395.80	d10	4 south
					1.641	2557.04	d11	5	south
					1.535	436.88	d11	6	south
					6.547	2383.08	d00	7	south
					6.562	382.00	d00	8	south
					6.749	2397.84	d01	9	south
					6.825	429.88	d01	10	south
					4.253	2385.68	c10	11	south
					4.300	382.80	c10	12	south
					4.788	2357.36	c11	13	south
					4.833	418.24	c11	14	south
					7.684	2344.76	c00	15	south
					7.655	386.36	c00	16	south
					7.937	2419.76	c01	17	south
					7.891	414.24	c01	18	south
					6.400	2505.04	s10	19	south
					6.611	379.96	s10	20	south
					8.540	2431.72	s00	21	south
					8.723	391.84	s00	22	south
				};
				
				\addplot[magenta, dashed] coordinates
				{(-0.12,1800) (9.7,1800) (9.7,7500) (-0.12,7500) (-0.12,1800)};
				
				
				\addplot[cyan] coordinates
				{(1.0,250) (9.7,250) (9.7,750) (1.0,750) (1.0,250)};
				
				
				\addplot[cyan] coordinates
				{(-0.12,8000) (0.3,8000) (0.3,11500) (-0.12,11500) (-0.12,8000)};
			\end{axis}
			
		\end{tikzpicture}
	\end{subfigure}
\end{figure}

In Figure~\ref{fig:deltaGap}, we report the  \% Primal Gap vs. Time, where each metric is averaged over instances. We have several interesting observations from these experiments:
\begin{itemize}
	\item In terms of the \% Primal Gap, the most successful heuristics are Version 1 and 2, in which we only apply an edge filter. Since these heuristics only fix a small number of variables, it is conceivable that they provide the highest quality feasible solutions. However, this comes with the price of solving a difficult MILP at the end of Algorithm~\ref{alg:feasible-solution}. In terms of Time, the MTZ formulation is faster for $p=10$ and the Flow formulation is faster for $p=20,30,40$. This is consistent with the results of the experiments run with the MILP models. We also observe that these two versions are slower than directly using the MILP models for $p=40$, therefore, they are not advantageous in the case of more difficult instances.
	\item 
	For some instances, Version 1 or Version 2 are able to find higher quality feasible solutions that the best MILP solution found (i.e., the \% Primal Gap is negative). In particular, there are 4/75, 5/50 and 6/25 instances for $p=20$, $p=30$ and $p=40$, respectively, for which the heuristic solutions are of higher quality. Most of these instances come from  the Bench family and a few are from the pMed family. 
	This underscores the value of heuristic approaches in the case of more difficult instances.
	\item Overall, the versions that do not apply the edge filter are dominated by those that do apply this filter in terms of \% Primal Gap. Therefore, it is crucial to apply the edge filter to obtain high quality feasible solutions.
	\item Apart from Versions 1 and 2, the most successful heuristics in terms of the \% Primal Gap are Versions 3 and 4, in which the node filter \texttt{LV} is applied in addition to the edge filter. These versions are closely followed Versions 5 and 6 in which additional reduction step is taken.
	\item 
	Versions that use the node filter \texttt{LV-g} seem to be outperformed by versions that use the node filter \texttt{LV} in terms of the \% Primal Gap.
	\item Versions 21 and 22 are consistently the worst performing heuristics. We observe that the versions utilizing node filter \texttt{Ch} are not very successful in general. 
	\item In terms of Time, except for Versions 1 and 2, all  the remaining versions are similar for the MTZ and Flow formulations, where the former is much faster and is the preferable formulation.
	\item 
	In general, the reduction step applied does not seem to decrease the computational effort as initially intended and worsens the \% Primal Gap. Therefore, this additional step is not advisable.
	\item As expected, the CPU time of each version increases with $p$. However, it is interesting to observe that the \% Primal Gap values tend to decrease with $p$. 
\end{itemize}
To summarize, time consuming heuristics Version 1 (for $p=20,30,40$) and Version 2 (for $p=10$) are the best  if solution quality is prioritized. On the other hand, Version 4 is the best heuristic that balances quality and computational effort successfully. This is the reason it is used in the experiments conducted for the large-scale instances  reported in Section~\ref{sec:largeScale}.

%
%
%
%
%
%

In Figure~\ref{fig:lpGap}, we report the  \% Dual Gap vs. Time, where each metric is averaged over instances. Most of our observations from Figure~\ref{fig:deltaGap} carry over here as well.

\begin{figure}
	\caption{\% Dual Gap vs. Time. The markers in the rectangle with solid (resp. dashed) border use  the \texttt{MTZ} (resp. \texttt{Flow}) formulation.}
	\label{fig:lpGap}
	
	\begin{subfigure}{.5\textwidth}
		\begin{tikzpicture} 
			\begin{axis}[
				title={{$p=10$}},
				xmin=-0.25, 
				xmax=11.25,
				ymin=10, 
				ymax=12000,
				ymode=log,
				ylabel={Time (s)},
				scatter/classes={
					t={scale=1.95,mark=triangle*, blue}, 
					d00={scale=1.95,mark=halfdiamond*, mark color=white, fill=white}, 
					d01={scale=1.95,mark=halfdiamond*, mark color=white, fill=black}, 
					d10={scale=1.95,mark=halfdiamond*, mark color=black, fill=white}, 
					d11={scale=1.95,mark=halfdiamond*, mark color=black, fill=black}, 
					c00={scale=1.95,mark=halfcircle*, fill=white, mark color=white}, 
					c01={scale=1.95,mark=halfcircle*, fill=white, mark color=red}, 
					c10={scale=1.95,mark=halfcircle*, fill=red, mark color=white}, 
					c11={scale=1.95,mark=halfcircle*, fill=red, mark color=red}, 
					s00={scale=1.95,mark=halfsquare*, mark color=white, fill=white},
					s10={scale=1.95,mark=halfsquare*, mark color=green, fill=white}
				},
				] 
				\addplot[
				scatter, 
				only marks,
				scatter src=explicit symbolic,
				nodes near coords*={\annotvalue},
				node near coord style={ anchor=\anchorvalue, font=\scriptsize, color=\labelcolor},
				visualization depends on={value \thisrow{annotation} \as \annotvalue },
				visualization depends on={  value \thisrow{anchor} \as \anchorvalue },
				]
				table[meta=label] {
					x       y            label  annotation  anchor
					2.934531264	206.50	t	1	south
					2.936267582	62.15	t	2	south
					4.591722719	174.55	d10	3	south
					4.5327235	32.17	d10	4	south
					4.850243019	177.33	d11	5	south
					4.80137766	33.22	d11	6	south
					6.152821958	181.68	d00	7	south
					6.031050553	32.41	d00	8	south
					6.436128777	171.36	d01	9	south
					6.321359363	35.40	d01	10	south
					5.738704271	181.94	c10	11	south
					5.764742422	32.59	c10	12	south
					6.20828872	168.19	c11	13	south
					6.247168087	33.33	c11	14	south
					7.145011005	176.09	c00	15	south
					7.077230926	32.79	c00	16	south
					7.363778761	190.72	c01	17	south
					7.288535295	33.93	c01	18	south
					10.23546838	187.32	s10	19	south
					10.10023417	32.43	s10	20	south
					10.90757161	185.97	s00	21	south
					10.84535998	32.90	s00	22	south
				};
				
				\addplot[magenta, dashed] coordinates
				{(2.5,100) (11.15,100) (11.15,300) (2.5,300) (2.5,100)};
				
				\addplot[cyan] coordinates
				{(2.5,25) (11.15,25) (11.15,80) (2.5,80) (2.5,25)};
			\end{axis}
			
		\end{tikzpicture}
	\end{subfigure}
	\begin{subfigure}{.5\textwidth}
		\begin{tikzpicture}
			
			\begin{axis}[
				title={{$p=20$}},
				xmin=-0.25, 
				xmax=11.25,
				ymin=10, 
				ymax=12000,
				ymode=log,
				scatter/classes={
					t={scale=1.95,mark=triangle*, blue}, 
					d00={scale=1.95,mark=halfdiamond*, mark color=white, fill=white}, 
					d01={scale=1.95,mark=halfdiamond*, mark color=white, fill=black}, 
					d10={scale=1.95,mark=halfdiamond*, mark color=black, fill=white}, 
					d11={scale=1.95,mark=halfdiamond*, mark color=black, fill=black}, 
					c00={scale=1.95,mark=halfcircle*, fill=white, mark color=white}, 
					c01={scale=1.95,mark=halfcircle*, fill=white, mark color=red}, 
					c10={scale=1.95,mark=halfcircle*, fill=red, mark color=white}, 
					c11={scale=1.95,mark=halfcircle*, fill=red, mark color=red}, 
					s00={scale=1.95,mark=halfsquare*, mark color=white, fill=white},
					s10={scale=1.95,mark=halfsquare*, mark color=green, fill=white}
				},
				] 
				
				\addplot[
				scatter, 
				only marks,
				scatter src=explicit symbolic,
				nodes near coords*={\annotvalue},
				node near coord style={ anchor=\anchorvalue, font=\scriptsize, color=\labelcolor},
				visualization depends on={value \thisrow{annotation} \as \annotvalue },
				visualization depends on={  value \thisrow{anchor} \as \anchorvalue },
				]
				table[meta=label] {
					x       y            label  annotation  anchor
					1.693	877.55	t	1	south
					1.696	1310.03	t	2	south
					3.279	470.80	d10	3	south
					3.379	80.05	d10	4	south
					3.664	499.92	d11	5	south
					3.762	84.55	d11	6	south
					6.525	475.05	d00	7	south
					6.613	80.49	d00	8	south
					6.992	462.12	d01	9	south
					7.044	85.72	d01	10	south
					4.969	467.23	c10	11	south
					5.090	79.79	c10	12	south
					5.550	468.48	c11	13	south
					5.682	84.68	c11	14	south
					7.440	463.77	c00	15	south
					7.409	80.16	c00	16	south
					7.723	484.57	c01	17	south
					7.766	83.68	c01	18	south
					7.932	510.55	s10	19	south
					8.074	79.68	s10	20	south
					9.354	466.17	s00	21	south
					9.303	81.51	s00	22	south
				};

				\addplot[magenta, dashed] coordinates
				{(1.45,300) (9.7,300) (9.7,1100) (1.45,1100) (1.45,300)};
				
				
				\addplot[cyan] coordinates
				{(3,50) (9.7,50) (9.7,150) (3,150) (3,50)};
				
				
				\addplot[cyan] coordinates
				{(1.45,1150) (1.95,1150) (1.95,1600) (1.45,1600) (1.45,1150)};
				
			\end{axis}
			
		\end{tikzpicture}
	\end{subfigure}
	
	\begin{subfigure}{.5\textwidth}
		\begin{tikzpicture} 
			\begin{axis}[
				title={{$p=30$}},
				xmin=-0.25, 
				xmax=11.25,
				ymin=10, 
				ymax=12000,
				ymode=log,
				xlabel={\% Dual Gap}, 
				ylabel={Time (s)},
				scatter/classes={
					t={scale=1.95,mark=triangle*, blue}, 
					d00={scale=1.95,mark=halfdiamond*, mark color=white, fill=white}, 
					d01={scale=1.95,mark=halfdiamond*, mark color=white, fill=black}, 
					d10={scale=1.95,mark=halfdiamond*, mark color=black, fill=white}, 
					d11={scale=1.95,mark=halfdiamond*, mark color=black, fill=black}, 
					c00={scale=1.95,mark=halfcircle*, fill=white, mark color=white}, 
					c01={scale=1.95,mark=halfcircle*, fill=white, mark color=red}, 
					c10={scale=1.95,mark=halfcircle*, fill=red, mark color=white}, 
					c11={scale=1.95,mark=halfcircle*, fill=red, mark color=red}, 
					s00={scale=1.95,mark=halfsquare*, mark color=white, fill=white},
					s10={scale=1.95,mark=halfsquare*, mark color=green, fill=white}
				},
				] 
				
				\addplot[
				scatter, 
				only marks,
				scatter src=explicit symbolic,
				nodes near coords*={\annotvalue},
				node near coord style={ anchor=\anchorvalue, font=\scriptsize, color=\labelcolor},
				visualization depends on={value \thisrow{annotation} \as \annotvalue },
				visualization depends on={  value \thisrow{anchor} \as \anchorvalue },
				]
				table[meta=label] {
					x       y            label  annotation  anchor
					1.106	2393.24	t	1	south
					1.121	3451.82	t	2	south
					2.616	998.36	d10	3	south
					2.586	163.92	d10	4	south
					2.961	1070.00	d11	5	south
					2.941	177.24	d11	6	south
					6.775	1008.74	d00	7	south
					6.680	162.42	d00	8	south
					6.990	997.10	d01	9	south
					6.919	176.74	d01	10	south
					4.982	1019.46	c10	11	south
					4.918	162.06	c10	12	south
					5.452	988.56	c11	13	south
					5.410	174.38	c11	14	south
					7.655	973.42	c00	15	south
					7.574	162.84	c00	16	south
					7.785	1032.54	c01	17	south
					7.789	173.06	c01	18	south
					7.119	1054.72	s10	19	south
					7.169	160.86	s10	20	south
					8.882	1026.62	s00	21	south
					8.831	165.32	s00	22	south
				};

				\addplot[magenta, dashed] coordinates
				{(0.9,700) (9.7,700) (9.7,2900) (0.9,2900) (0.9,700)};
				
				
				\addplot[cyan] coordinates
				{(2,120) (9.7,120) (9.7,360) (2,360) (2,120)};
				
				
				\addplot[cyan] coordinates
				{(0.9,3000) (1.4,3000) (1.4,4500) (0.9,4500) (0.9,3000)};
			\end{axis}
			
		\end{tikzpicture}
	\end{subfigure}
	\begin{subfigure}{.5\textwidth}
		\begin{tikzpicture}
			
			\begin{axis}[
				title={{$p=40$}},
				xmin=-0.25, 
				xmax=11.25,
				ymin=10, 
				ymax=12000,
				ymode=log,
				xlabel={\% Dual Gap}, 
				scatter/classes={
					t={scale=1.95,mark=triangle*, blue}, 
					d00={scale=1.95,mark=halfdiamond*, mark color=white, fill=white}, 
					d01={scale=1.95,mark=halfdiamond*, mark color=white, fill=black}, 
					d10={scale=1.95,mark=halfdiamond*, mark color=black, fill=white}, 
					d11={scale=1.95,mark=halfdiamond*, mark color=black, fill=black}, 
					c00={scale=1.95,mark=halfcircle*, fill=white, mark color=white}, 
					c01={scale=1.95,mark=halfcircle*, fill=white, mark color=red}, 
					c10={scale=1.95,mark=halfcircle*, fill=red, mark color=white}, 
					c11={scale=1.95,mark=halfcircle*, fill=red, mark color=red}, 
					s00={scale=1.95,mark=halfsquare*, mark color=white, fill=white},
					s10={scale=1.95,mark=halfsquare*, mark color=green, fill=white}
				},
				] 
				
				\addplot[
				scatter, 
				only marks,
				scatter src=explicit symbolic,
				nodes near coords*={\annotvalue},
				node near coord style={ anchor=\anchorvalue, font=\scriptsize, color=\labelcolor},
				visualization depends on={value \thisrow{annotation} \as \annotvalue },
				visualization depends on={  value \thisrow{anchor} \as \anchorvalue },
				]
				table[meta=label] {
					x       y            label  annotation  anchor
					0.842868472	6242.24	t	1	south
					0.828667295	9029.84	t	2	south
					2.14607879	2418.64	d10	3	south
					2.07496187	395.80	d10	4	south
					2.340832535	2557.04	d11	5	south
					2.24003186	436.88	d11	6	south
					6.845114977	2383.08	d00	7	south
					6.858701728	382.00	d00	8	south
					7.01947331	2397.84	d01	9	south
					7.086812423	429.88	d01	10	south
					4.791949203	2385.68	c10	11	south
					4.835124213	382.80	c10	12	south
					5.272409177	2357.36	c11	13	south
					5.313553088	418.24	c11	14	south
					7.82604771	2344.76	c00	15	south
					7.801226172	386.36	c00	16	south
					8.036389838	2419.76	c01	17	south
					8.001702244	414.24	c01	18	south
					6.697293617	2505.04	s10	19	south
					6.886353201	379.96	s10	20	south
					8.55234764	2431.72	s00	21	south
					8.706436429	391.84	s00	22	south
				};
				
				\addplot[magenta, dashed] coordinates
				{(0.55,1800) (9.20,1800) (9.20,7500) (0.55,7500) (0.55,1800)};
				
				
				\addplot[cyan] coordinates
				{(1.855,250) (9.20,250) (9.20,750) (1.855,750) (1.855,250)};
				
				
				\addplot[cyan] coordinates
				{(.55,8000) (1.05,8000) (1.05,11500) (.55,11500) (.55,8000)};
			\end{axis}
			
		\end{tikzpicture}
	\end{subfigure}
\end{figure}

\subsubsection{Large-Scale Instances}
\label{sec:largeScale}

To test the capability of Version 4 to the fullest extent, five random samples from each graph type with 900 nodes are generated and we set $p=90$. 
Since the pMed instances are taken from the literature and only three instances have 900 nodes, two additional instances are produced by changing data and deployment costs. Since it is impractical to solve MILP model for such large instances, we are unable to report the \% Primal Gap. Therefore, we only report \% Dual Gap to show the effectiveness of our approach. 

We have the following observations as a result of the experiments:

\begin{itemize}
	\item Table~\ref{tab:large} demonstrates that the duality gap is at most 3\% for these instances. Considering the large-scale nature of these instances, we think that these results are satisfactory.
	\item In this table, although ER and FF have similarly {high} densities, their Time is dramatically different. This is an interesting observation since solving the MILP at Phase 4  is relatively easy considering our additional experiments on pMed instances with $|N|=100$ and $p=90$, which are solved within 68.6 seconds on average. However, two of the FF instances hit the  two-hour time limit at Phase 4. Therefore, we reach the  conclusion that graph structure plays a significant role in accounting for Time differences {in high densities}.
	\item
	In low densities, BA, bench and pmed instances yield similar results with respect to Time. So, the effect of graph structure seems to diminish for sparse graphs unlike dense ones.
	
\end{itemize}

\begin{table}[H]
	\centering
	\footnotesize
	\caption{Version 4 results for large-scale instances with $|N|=900$  and $p=90$.}
	\label{tab:large}
	\begin{tabular}{|ccc|cr|}
		\hline
		{instance} &  $|E|$ & {density} & {\% Dual Gap} & {Time} \\ \hline
		er-900              & 121425.20  & 0.30             & 1.03             & 691.00        \\
		ba-900             & 16731.00   & 0.04             & 1.39             & 3479.40       \\
		bench-900            & 20035.60   & 0.05             & 1.33             & 3661.40       \\
		ff-900               & 124364.80  & 0.31             & 0.59             & 3453.40       \\
		pmed-900             & 16200.00   & 0.04             & 2.88             & 3699.60      \\ \hline
	\end{tabular}
\end{table}


\section{Conclusions}
\label{sec:conclusions}

In this work, we introduced a new NP-Hard problem inspired by routing of information from sensors to sinks and the share of information between sinks. On top of that, we also incorporate the uncapacitated facility location and p-median problem aspects because these sensors have communication data and we are required to deploy $p$ sinks that are able to serve all nodes. 
\textcolor{black}{Furthermore, an LP rounding-based four-phase matheuristic is devised to address the challenges posed by large-scale instances, where obtaining an optimal solution is often computationally prohibitive.} So as to generalize our findings, we created a test bed based on four well known graph types and one graph type taken from the literature while system parameters like deployment cost, data, edge cost and connection costs are generated according to our system of interest which is the system with high deployment cost.

We performed extensive computational experiments. According to the exact MILP results, the CPU time increases significantly with the number of nodes whereas there is no clear relation  between the parameter $p$ and the CPU Time. However, as $p$ increases, the optimality gap decreases suggesting that finding a larger induced spanning tree is easier than finding a smaller one. When it comes to the graph structure effect, experiments on the pMed instances result in the worst CPU time and \% Gap.
One might argue that the pMed is the sparsest, and thus, its sparsity explains the observed results. However, this is not the case. In fact, Bench is the second sparsest, yet it is easier to solve compared to the FF, which has a significantly higher density. Furthermore, when examining dense graphs, the experimental results for FF and ER reveal substantial differences in KPIs. These observations suggest that graph structure, rather than density alone, plays a critical role in explaining the variability in KPIs.
According to model performances, MTZ surpasses Flow on \% Gap slightly while giving up \%13 - \% 18 CPU Time. So, we recommend practitioners Flow model if they can tolerate the slight difference in solution quality.


We analyze three cost components under MTZ. The results suggest that as $p$ increases, the access cost decreases because each node is connected to its closest sink at a smaller cost. On the flip side, connection and deployment costs increase simultaneously with $p$. We observe larger access cost and thus larger total cost in sparse Bench and pMed instances. 

Our matheuristic approach has many versions depending on parameter selection in different algorithm phases. We compare the performance of these versions among themselves with regard to \% Primal Gap and Time in a bi-objective manner. The results demonstrate that the versions which do not apply \texttt{edgeFilter} and \texttt{reduction} yield the lowest \% Primal gap, and hence high quality feasible solutions. They even generate better outcomes than the best MILP solution in some Bench and pMed instances. When \texttt{nodeFilter} comes into play, applying \citet{lin1992approximations}'s method for \texttt{nodeFilter} yields significantly faster but slightly worse solution quality than two high quality versions without \texttt{nodeFilter}. Similar conclusions are drawn from \% Dual Gap results; thus, the latter approach, balancing Time and solution quality, is selected for large-scale experiment for sensitivity analysis. 

The results on large-scale instances show that the \% Dual Gap is at most 3\% on each graph type. Next, among dense graphs, there is a significant difference in computation time between the two graph types, which can be attributed to variations in their structural properties. However, this effect diminishes in graphs with similarly low densities, where the impact of structure becomes less pronounced. 

There are promising future research directions. From a methodological point of view, one can develop a Benders Decomposition algorithm for better computational efficiency. Secondly, this problem can be differentiated by relaxing the spanning tree assumption between sinks. In the new setting, sinks could be connected via Steiner Tree as in the Connected Facility Location Problem. Our developed matheuristic could be used as a mean to compare the total cost between two different settings and can draw insights for the practitioners or researchers working with sensor networks.

\section*{Acknowledgment}
Murat Elhüseyni and Miklós Krész have been supported by the Slovenian Research and Innovation Agency (ARIS) through grant J2-2504. Miklós Krész is grateful for the support of the ARIS grants N2-0434, N2-0486, J1-70046, BI-HU/26-27-006 and BI-HU/26-27-007. He has been also supported by the research program CogniCom (0013103) at the University of Primorska. 

\section*{Data Statement}


The data and codes are available at 
\url{https://github.com/muratelhuseyni/SensorNetworkOptimization/tree/main/
mathematical\%20programming}




  \bibliographystyle{elsarticle-harv} 
  \bibliography{lit}






\end{document}